\theoremstyle{plain}
\newtheorem{theorem}{Theorem}
\newtheorem{lemma}[theorem]{Lemma}
\newtheorem{proposition}[theorem]{Proposition}
\newtheorem{theoremcite}{Theorem}
\newtheorem{lemmacite}[theoremcite]{Lemma}
\theoremstyle{definition}
\newtheorem{problem}{Problem}
\newtheorem{question}{Question}
\newtheorem{example}{Example}
\theoremstyle{remark}
\theoremstyle{definition}
\begin{document}

\title[\tiny{Explicit traces of functions on Sobolev spaces and
quasi-optimal linear interpolators}]
{Explicit traces of functions on Sobolev spaces and
quasi-optimal linear interpolators}

\author{Daniel Est\'evez}
\address{Departamento de Matem\'{a}ticas\\
Universidad Aut\'onoma de Madrid\\ Cantoblanco 28049 (Madrid)\\ Spain}
\email{daniel.estevez@uam.es}
\thanks{
The author has been supported by a grant of the Mathematics Department of the
Universidad Aut\'onoma de Madrid and the ICMAT-Intro grant of the Institute for
Mathematical Sciences, Spain.
}
\subjclass[2010]{Primary 46E35; Secondary 41A15, 58C25.}
\keywords{Bounded linear extension operator; Sobolev spaces; trace norm; spline
interpolation}

\date\today

\begin{abstract}
Let $\Lambda \subset \mathbb{R}$ be a strictly increasing sequence. For $r =
1,2$, we give a simple explicit expression for an equivalent norm on the trace
spaces $W_p^r(\mathbb{R})|_\Lambda$, $L_p^r(\mathbb{R})|_\Lambda$ of the
non-homogeneous and homogeneous Sobolev spaces with $r$
derivatives $W_p^r(\mathbb{R})$, $L_p^r(\mathbb{R})$. We also construct an
interpolating spline of low degree having optimal norm up to a
constant factor. A general result relating interpolation in $L^r_p(\mathbb{R})$
and $W^r_p(\mathbb{R})$ for all $r \geq 1$ is also given.
\end{abstract}

\maketitle

\section{Introduction}

Let $I \subset \mathbb{R}$ be an interval (possibly infinite). The Sobolev
space $W_p^r(I)$ for integer $r \geq 1$ is defined as the completion of the
space of complex functions $F \in
\mathcal{C}^\infty(I)$ such that the norm
\[
 \|F\|_{W_p^r(I)}^p = \int_I |F(x)|^p\,dx + \int_I |F^{(r)}(x)|^p\,dx
\]
is finite. By the Sobolev embedding theorem, if $F \in W_p^r$, $1 \leq p < \infty$,
then $F^{(r-1)}$ is absolutely continuous.
Similarly, one can define the homogeneous Sobolev space $L_p^r(I)$ as the space
of functions of finite seminorm
\[
 \|F\|_{L_p^r(I)}^p = \int_I |F^{(r)}(x)|^p\,dx.
\]
In fact, given any $x_0\in I$, the expression
\[
\left(|F(x_0)|^p + \cdots + |F^{(r-1)}(x_0)|^p +
\|F\|^p_{L_p^r(I)}\right)^{\frac{1}{p}}
\]
gives a Banach-space norm on $L_p^r(I)$. If $I$ is a finite interval, then
$L_p^r(I) = W_p^r(I)$, and this Banach norm is equivalent to
$\|\cdot\|_{W_p^r(I)}$.

In the sequel, let $\mathbb{X}$ stand either for $W_p^r(I)$ or $L_p^r(I)$.
If $\Lambda \subset I$ is any set, we can define the trace space
\[
\mathbb{X}|_\Lambda = \{F|_\Lambda : F \in \mathbb{X}\}.
\]
There is the following natural choice of a norm (or seminorm) in the trace
space:
\[
\|f\|_{\mathbb{X}|_\Lambda} = \inf \{\|F\|_{\mathbb{X}} : F|_\Lambda = f\}.
\]

Throughout this note we
will restrict ourselves to the case where $\Lambda =
\{\lambda_n\}_{n\in\mathbb{Z}}$ is an increasing sequence:
\[
 \lambda_n < \lambda_{n+1},\qquad n\in\mathbb{Z}.
\]

As is known \cite{Fefferman05Sharp,Israel}, the following questions are of
relevance in the study of the trace space.

\begin{problem}
\label{problem1}
Given some $f:\Lambda \to \mathbb{C}$, when does $f$ extend to a function $F \in
\mathbb{X}$ satisfying $F|_\Lambda = f$? Can one take this extension to depend
linearly on the data $f$?
\end{problem}

\begin{problem}
\label{problem2}
Find an explicit simple formula for a norm (or seminorm) equivalent to
$\|\cdot\|_{\mathbb{X}|_\Lambda}$.
\end{problem}

Observe that in our case, $\mathbb{X}|_\Lambda$ is a sequence space. Hence, we
are searching for an intrinsic characterization of the sequences in
$\mathbb{X}|_\Lambda$. The solution of Problem~\ref{problem2} can help us
solve Problem~\ref{problem1}, because if we have a formula for an equivalent
norm in the trace space, we can characterize the trace space as the space of
sequences having finite norm. This does not address the question of the linear
dependence of the extension. To solve this, it is usual to consider the
next problem.

\begin{problem}
\label{problem3}
Does there exist a linear operator $T: \mathbb{X}|_\Lambda \to \mathbb{X}$
which is bounded and satisfies $(Tf)|_\Lambda = f$, for all $f \in
\mathbb{X}|_\Lambda$? If so, can one give a simple
formula for one such $T$?
\end{problem}

An operator having these properties is usually called a bounded linear extension
operator. In the case when $\Lambda$ is a sequence, the problem of finding an
$F$ such that $F|_\Lambda = f$ can be understood as an interpolation problem
(we say that such an $F$ interpolates the data $f$). A bounded linear extension
operator provides a function $Tf$ interpolating the data $f$ and satisfying
\begin{equation}
\label{eq:quasi-optimal-equivalent}
 \|f\|_{\mathbb{X}|_\Lambda} \leq \|Tf\|_{\mathbb{X}} \leq
\|T\|\|f\|_{\mathbb{X}|_\Lambda}.
\end{equation}
Here the first inequality comes from the definition of the trace norm. Hence, a
bounded linear operator gives an interpolating function $Tf$ having optimal
norm up to a constant factor.
Because of this property, we will refer to
these operators as quasi-optimal interpolators.
In applications, it is important to find interpolators $T$ with small norm.

The construction of a quasi-optimal interpolator given by a simple formula can
also allow us to solve Problem~\ref{problem2}, as
\eqref{eq:quasi-optimal-equivalent} shows that if we put $\|f\|_{\text{eq}} =
\|Tf\|_{\mathbb{X}}$, then $\|\cdot\|_{\text{eq}}$ is an equivalent norm to
$\|\cdot\|_{\mathbb{X}|_\Lambda}$.

In this note we solve Problems~\ref{problem1}--\ref{problem3} for the
Sobolev spaces $W^r_p(I)$, $L^r_p(I)$ for $r = 1,2$; $1 < p < \infty$.
We use the standard notation $f(x_1,\ldots,x_n)$ for the divided differences:
\[
\begin{split}
 f(x_1,x_2) &= \frac{f(x_2) - f(x_1)}{x_2 - x_1},\\
 f(x_1,\ldots,x_n) &= \frac{f(x_2,\ldots,x_n) - f(x_1,\ldots,x_{n-1})}{x_n -
x_1}, \qquad n \geq 3.
\end{split}
\]

We will
always assume that
\begin{equation}
\label{eq:I}
 I = \bigcup_{n\in\mathbb{Z}} [\lambda_n,\lambda_{n+1}].
\end{equation}
This is a general enough case. To see this, let $I = (\alpha,\beta)$ and let
$(a,b)$ be the right hand side of \eqref{eq:I}. Assume, for instance, that
$a = \alpha$, $b <
\beta$ and that we are dealing with the space $L^r_p(I)$ (the other cases are
very similar). Observe that $b < +\infty$, so that
$\Lambda=\{\lambda_n\}_{n\in\mathbb{Z}}$ clusters at $b$.
Assume that $F \in L^r_p[a,b]$ is such that $F|_\Lambda =
f$. Using the fact that
$F^{(j)}$, $0\leq j< r$, are continuous, and the mean value theorem for
divided differences (see \eqref{eq:mean-value}), one sees that the values of
$F(b),\ldots,F^{(r-1)}(b)$ are
completely determined by $f$. Hence, to find a quasi-optimal interpolating
function in $L^r_p(I)$, we first find a quasi-optimal interpolating function
$F \in L^r_p[a,b]$. Then we have to extend this $F$ to a quasi-optimal
$\widetilde{F} \in L^r_p(I)$. Thus, we need to define
$\widetilde{F}|_{(b,\beta)}$,
preserving the continuity of $\widetilde{F}^{(j)}$, $0\leq j < r$. Since the
values
$\widetilde{F}(b),\ldots,\widetilde{F}^{(r-1)}(b)$ are fixed by $f$, one can
even construct
$G = \widetilde{F}|_{(b,\beta)}$ beforehand. Moreover, finding a quasi-optimal
$G \in
L^r_p(b,\beta)$ with given $G(b),\ldots,G^{(r-1)}(b)$ is an easy
problem.

Define
\begin{equation}
\label{eq:equivalent-norms}
\begin{split}
\|f\|^p_{\text{eq},L} &= 
 \sum_{n\in\mathbb{Z}} (\lambda_{n+r} -
\lambda_n)|f(\lambda_n,\ldots,\lambda_{n+r})|^p,\\
\|f\|^p_{\text{eq},W} &= \|f\|^p_{\text{eq},L} +
\sum_{j=0}^{r-1}\sum_{n\in\mathbb{Z}}
(\lambda_{n+r} -
\lambda_n)^{jp+1}|f(\lambda_n,\ldots,\lambda_{n+j})|^p
.
\\
\end{split}
\end{equation}
In this paper we show that
\begin{equation}
\label{eq:equivalent-L}
 C(r)\|f\|_{\text{eq},L} \leq \|f\|_{L_p^r(I)|_\Lambda} \leq C'(r)
\|f\|_{\text{eq},L},\qquad r = 1,2,\quad 1 \leq p < \infty.
\end{equation}
Note that the constants do not depend on $p$.
Let
\[
 h_n = \lambda_{n+1} - \lambda_n, \qquad n \in \mathbb{Z}
\]
be the interpolation steps. We say that the steps are uniformly bounded
if there is a constant $K$ such that
\begin{equation}
\label{eq:bounded-steps}
 h_n \leq K.
\end{equation}
Then we also show that if \eqref{eq:bounded-steps} holds, then
\begin{equation}
\label{eq:equivalent-W}
 C(r,K)\|f\|_{\text{eq},W} \leq \|f\|_{W_p^r(I)|_\Lambda} \leq C'(r,K)
\|f\|_{\text{eq},W},\qquad r = 1,2,\quad 1 \leq p < \infty.
\end{equation}
In fact, we prove a general result (for any $r \geq 1$) which relates
quasi-optimal interpolation in $L^r_p$ and $W^r_p$ (see
Theorem~\ref{theorem-relation}). Using this Theorem, \eqref{eq:equivalent-W}
will follow from \eqref{eq:equivalent-L}.

We conjecture that \eqref{eq:equivalent-L} and \eqref{eq:equivalent-W} are also
true for $r \geq 3$. See Section~\ref{open-questions} for a discussion of this
and some other open questions.

In the case $r=2$, the term $j = 1$ in the expression for $\|f\|_{\text{eq},W}$
can be eliminated, thus giving a simpler expression. We will show in
Proposition~\ref{simpler} that for $r=2$, $\|f\|^p_{\text{eq},W}$ is equivalent
to
\begin{equation}
\label{eq:simple-W}
\|f\|^p_{\text{simp},W} = \|f\|^p_{\text{eq},L} +
\sum_{n\in\mathbb{Z}} (\lambda_{n+1} -
\lambda_{n-1})
|f(\lambda_n)|^p.
\end{equation}
For $r \geq 3$, in general, one cannot hope to remove all the terms $1 \leq j
\leq r-1$ from the expression for $\|f\|_{\text{eq},W}$, thus obtaining
a expression similar to \eqref{eq:simple-W}. However, if the interval $I$ is
large enough in comparison with $K$, this can be done. See
Section~\ref{simplification} for these
results.

Spline interpolators are a useful class of interpolators because they
provide computationally simple formulas. We will say that
$T$ is a spline interpolator of degree $d$ if there is a
decomposition of $I$ into closed intervals $\{I_j\}$ which intersect only at
their endpoints, and such that for any $f$, the
restriction of $Tf$ to each interval $I_j$ is a polynomial of degree at
most $d$. In this paper we construct quasi-optimal spline
interpolators $\Phi_1$, $\Phi_2$ (for $r = 1,2$ respectively) given by simple
expressions.

Extension by smooth functions dates back to Whitney \cite{Whitney}. In 
1934, he solved Problem~\ref{problem1} for the space
$\mathcal{C}^m(\mathbb{R})$.
Brudnyi and Shvartsman extended Whitney's results to the space
$\mathcal{C}^{1,\omega}(\mathbb{R}^n)$ in \cite{BrudnyiShvartsman}.
Fefferman made more progress in \cite{Fefferman05, Fefferman06}, where he
solved Problem~\ref{problem1} for the
spaces $\mathcal{C}^m(\mathbb{R}^n)$ and $\mathcal{C}^{m,1}(\mathbb{R}^n)$. He
also proved the existence of bounded linear extension operators.
Recently, Luli generalized these results to $C^{m,\omega}(\mathbb{R}^n)$ in
\cite{Luli}.

The concept of the depth of an extension operator appears in some of these
works. A linear extension operator $T:\mathbb{X}|_\Lambda \to \mathbb{X}$ is
said to be of bounded depth if there is some $D \geq 0$ such that
\[
 (Tf)(x) = \sum_{y \in \Lambda} \phi(x,y) f(y),
\]
and for each $x$, $\phi(x,y)$ is nonzero for at most $D$ different values of
$y$. If $T$ has bounded depth, its depth is the smallest integer $D$ such that
this holds.

It is easy to see that the spline interpolators constructed in this paper have
bounded depth. In fact, $\Phi_1$ has depth $2$ and $\Phi_2$ has depth $3$.

Fefferman and Klartag have addressed the problem of computing efficiently
the extension function and the norm in the trace space in
\cite{FeffermanKlartagI, FeffermanKlartagII}. See the expository
paper~\cite{Fefferman08}. However, their algorithm does not give
explicit simple formulas.

In \cite{LuliPreprint}, Luli constructs a bounded depth interpolator in
$L_p^r(\mathbb{R})$ by pasting interpolating polynomials using partitions of
unity. Israel gives a result for $L_p^2(\mathbb{R}^2)$ in \cite{Israel}.
Shvartsman has obtained in \cite{Shvartsman07,Shvartsman09} results for the
non-homogeneous Sobolev space $W_p^1$, both in
$\mathbb{R}^n$ and in metric spaces.

In a recent paper \cite{FeffermanIsraelLuliMay}, Fefferman, Israel and Luli
extend Israel's result to $L_p^r(\mathbb{R}^n)$. They show that a
linear extension operator can be constructed such that it has assisted bounded
depth. In general, one cannot hope to construct extension operators of
uniformly bounded depth in $L_p^r$, as they show
in \cite{FeffermanIsraelLuliJune}. In the recent preprint \cite{Shvartsman13},
Shvartsman gives a generalization of the Whitney extension theorem for
$L_p^r(\mathbb{R}^n)$.

We remark that although our setting is less general than those mentioned
above, we will obtain a very simple expression for the norm in the trace
space and explicitly construct a simple quasi-optimal interpolator. We believe
that these results can be of interest in numerical analysis.

In many numerical methods, such as the finite element methods and the Galerkin
methods, an approximation for the
solution of an equation in an infinite dimensional function space is searched
in some finite dimensional subspace. The linear spaces of quasi-optimal splines
we construct (corresponding to a finite number of interpolation nodes) may be
good candidates for these finite dimensional spaces.

\section{The definition of the interpolators $\Phi_1$, $\Phi_2$}
\label{definition-interp}

\subsection{Definition of $\Phi_1$}
The interpolator $\Phi_1$ is just the piecewise linear interpolator. This means
that
$\Phi_1 f$ is an affine function in each interval $[\lambda_n,\lambda_{n+1}]$.
Together with the condition $(\Phi_1 f)(\lambda_n) = f(\lambda_n)$, this
completely determines $\Phi_1$.

The interpolator $\Phi_1$ is given by the following formula:
\begin{equation}
 \label{eq:formula-phi1}
(\Phi_1 f)(x) = f(\lambda_n)\frac{\lambda_{n+1}-x}{h_n} +
f(\lambda_{n+1})\frac{x-\lambda_n}{h_n}, \qquad \lambda_n \leq x \leq
\lambda_{n+1}.
\end{equation}

\subsection{Definition of $\Phi_2$}
The interpolator $\Phi_2$ is a spline interpolator of degree 3. It is defined
as follows.

Let 
\[
\mu_n = \frac{\lambda_n + \lambda_{n+1}}{2}.
\]
Given $f: \Lambda \to \mathbb{C}$, we
construct cubic polynomials on each of the intervals $[\mu_{n-1},
\lambda_n]$ and $[\lambda_n, \mu_n]$ such that the resulting cubic spline
$\Phi_2 f$ is $\mathcal{C}^1$ and satisfies $(\Phi_2 f)(\lambda_n) =
f(\lambda_n)$.

Put
\[
 \alpha_n(f) = \frac{h_nf(\lambda_{n-1},\lambda_n) +
h_{n-1}f(\lambda_n,\lambda_{n+1})}{h_{n-1}+h_n}.
\]
The conditions
\begin{equation}
\label{eq:phi2-conditions}
\begin{split}
 (\Phi_2 f)(\lambda_n) &= f(\lambda_n),\qquad (\Phi_2 f)'(\lambda_n)
= \alpha_n(f)\\
 (\Phi_2 f)(\mu_n) &= \frac{f(\lambda_n) + f(\lambda_{n+1})}{2},\qquad (\Phi_2
f)'(\mu_n) = f(\lambda_n,\lambda_{n+1}).
\end{split}
\end{equation}
determine $\Phi_2 f$, since $\Phi_2 f$ is a piecewise cubic polynomial
on each of the
intervals $[\mu_{n-1},\lambda_n]$, $[\lambda_n,\mu_n]$. Observe that, restricted
to the interval $[\mu_{n-1},\mu_n]$, $\Phi_2 f$ only depends on
$f(\lambda_{n-1})$, $f(\lambda_n)$, $f(\lambda_{n+1})$.

\begin{figure}
\begin{center}
\includegraphics[width=12cm]{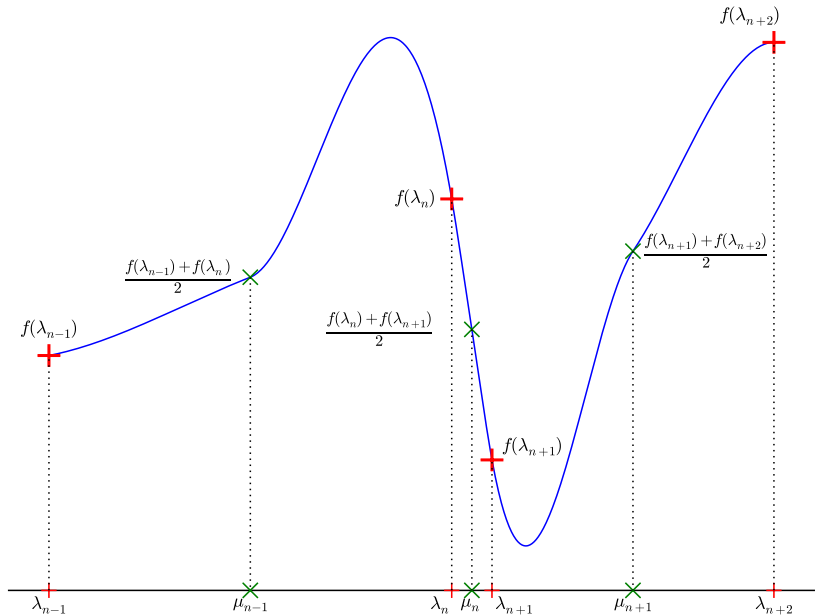}
\caption{Graph of $\Phi_2 f$.}
\label{fig:phi2}
\end{center}
\end{figure}

See Figure~\ref{fig:phi2} for a typical graph of $\Phi_2 f$. The
interpolation nodes $(\lambda_n, f(\lambda_n))$ are shown as $+$'s and the
auxiliary nodes $(\mu_n, (f(\lambda_n)+f(\lambda_{n+1}))/2)$ are shown as
$\times$'s.

We also have the following explicit formula for $\Phi_2 f$. Define
\[
 q(x) = 4(x^2 - x^3)
\]
and observe that $q$ satisfies the boundary conditions
\begin{equation}
\label{eq:q-conditions}
 q(0) = q'(0) = 0,\qquad q\left(\tfrac{1}{2}\right) = \tfrac{1}{2},\qquad
 q'\left(\tfrac{1}{2}\right) = 1.
\end{equation}
Then we have
\begin{equation}
\label{eq:phi2-formula}
\begin{split}
(\Phi_2 f)(x) &= f(\lambda_n) + \alpha_n(f)(x-\lambda_n)
+
h_{n-1}^2f(\lambda_{n-1},\lambda_n,\lambda_{n+1})
q\left(\frac {\lambda_n - x}{h_{n-1}}\right),
\quad \mu_{n-1} \leq x \leq
\lambda_n,\\
(\Phi_2 f)(x) &= f(\lambda_n) + \alpha_n(f)(x-\lambda_n)
+
h_n^2f(\lambda_{n-1},\lambda_n,\lambda_{n+1})q\left(\frac{x -
\lambda_n}{h_n}\right),\quad \lambda_n \leq x \leq \mu_n.
\end{split}
\end{equation}
By using the identities
\[
\begin{split}
h_{n-1}^2f(\lambda_{n-1},\lambda_n,\lambda_{n+1}) &= f(\lambda_{n-1}) -
f(\lambda_n) + h_{n-1}\alpha_n(f),\\
h_n^2f(\lambda_{n-1},\lambda_n,\lambda_{n+1}) &=
f(\lambda_{n+1}) - f(\lambda_n)- h_n\alpha_n(f),
\end{split}
\]
and \eqref{eq:q-conditions}, it is easy to see that
\eqref{eq:phi2-formula} defines the same spline as
\eqref{eq:phi2-conditions}.

\section{Main results}
\label{main-results}

In the sequel, we will use the notation $\|f\|_1 \approx \|f\|_2$,
which means that the norms $\|\cdot\|_1$ and $\|\cdot\|_2$ are equivalent, i.e.,
there are constants $C > 0$ and $C' < \infty$ such that, for all $f$,
\[
 C\|f\|_1 \leq \|f\|_2 \leq C'\|f\|_1.
\]

The first
result solves Problems~\ref{problem1}--\ref{problem3} for $L^r_p(I)$,
$W^r_p(I)$, $r = 1,2$.

\begin{theorem}
\label{main}
Let $r = 1,2$, and $1 \leq p < \infty$.
Define $\|f\|^p_{\text{eq},W}$, $\|f\|^p_{\text{eq},L}$ from
\eqref{eq:equivalent-norms}.
Then the following statements are true:
\begin{enumerate}[(a)]
 \item
 \label{t1-a}
 The seminorm $\|\cdot\|_{\text{eq},L}$ gives an equivalent seminorm in
$L_p^r(I)$:
\[
 \|f\|_{L_p^r(I)|_\Lambda} \approx \|f\|_{\text{eq},L},
\]
where the equivalence constants depend only on $r$. Moreover,
$\Phi_r:L_p^r(I)|_\Lambda \to L_p^r(I)$ is quasi-optimal and
$\|\Phi_r\| \leq C(r)$.

\item
\label{t1-b}
If \eqref{eq:bounded-steps} holds,
then the norm $\|\cdot\|_{\text{eq},W}$ gives an equivalent norm in
$W_p^r(I)$:
\[
 \|f\|_{W_p^r(I)|_\Lambda} \approx \|f\|_{\text{eq},W}.
\]
The equivalence constants depend only on $r,K$.
Moreover, $\Phi_r:W_p^r(I)|_\Lambda \to W_p^r(I)$ is quasi-optimal and
$\|\Phi_r\| \leq C'(r,K)$.
\end{enumerate}
\end{theorem}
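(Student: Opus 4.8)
The plan is to prove the two-sided estimate \eqref{eq:equivalent-L} for $L^r_p$ by establishing the two one-sided inequalities separately: a \emph{lower} bound $\|f\|_{\mathrm{eq},L}\le C''(r)\,\|F\|_{L^r_p(I)}$ valid for \emph{every} extension $F$ with $F|_\Lambda=f$, and an \emph{upper} bound obtained from the \emph{single} explicit extension $\Phi_r f$. Taking the infimum over $F$ in the first bound gives $\|f\|_{\mathrm{eq},L}\le C''(r)\,\|f\|_{L^r_p(I)|_\Lambda}$, while the second gives $\|f\|_{L^r_p(I)|_\Lambda}\le\|\Phi_r f\|_{L^r_p(I)}\le C'(r)\,\|f\|_{\mathrm{eq},L}$; together these are the asserted equivalence, and combining them yields $\|\Phi_r f\|_{L^r_p}\le C'(r)C''(r)\,\|f\|_{L^r_p(I)|_\Lambda}$, so $\Phi_r$ is quasi-optimal with $\|\Phi_r\|\le C(r)$. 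Thus part~(a) reduces to these two computations, and everything is arranged so that the $p$-dependent factors enter only as $A^p$ and disappear after taking $p$-th roots, which is what keeps the constants independent of $p$.

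For the lower bound I would use the integral (Hermite--Genocchi / Peano kernel) form of the mean value theorem for divided differences \eqref{eq:mean-value}, which expresses $f(\lambda_n,\dots,\lambda_{n+r})$ as $(r!)^{-1}$ times the average of $F^{(r)}$ against the B-spline density $M_n\ge 0$ supported on $[\lambda_n,\lambda_{n+r}]$ with $\int M_n=1$. Jensen's inequality then yields
\[
 |f(\lambda_n,\dots,\lambda_{n+r})|^p\le \frac{1}{(r!)^p}\int M_n(t)\,|F^{(r)}(t)|^p\,dt .
\]
Multiplying by $(\lambda_{n+r}-\lambda_n)$ and using the elementary bound $(\lambda_{n+r}-\lambda_n)\,M_n(t)\le c_r$ --- which for $r=1$ is the constant $1$ (here $M_n=h_n^{-1}\mathbf 1_{[\lambda_n,\lambda_{n+1}]}$) and for $r=2$ is $2$ (the tent of height $2/(\lambda_{n+2}-\lambda_n)$) --- gives $(\lambda_{n+r}-\lambda_n)|f(\lambda_n,\dots,\lambda_{n+r})|^p\le \frac{c_r}{(r!)^p}\int_{\lambda_n}^{\lambda_{n+r}}|F^{(r)}|^p$. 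Summing over $n$ and using that each $t$ lies in at most $r$ of the intervals $[\lambda_n,\lambda_{n+r}]$ bounds $\|f\|_{\mathrm{eq},L}^p$ by $\frac{c_r\,r}{(r!)^p}\,\|F\|_{L^r_p}^p$, and taking $p$-th roots absorbs the $p$-dependence. This is precisely the step where restricting to $r=1,2$ matters: the pointwise control $(\lambda_{n+r}-\lambda_n)M_n\le c_r$ is harmless for low order but fails with a spacing-independent constant once $r\ge 3$, which I expect to be the principal obstacle and the reason \eqref{eq:equivalent-L} is only conjectured beyond $r=2$.

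For the upper bound I would simply compute $\|\Phi_r f\|_{L^r_p(I)}$ from the explicit formulas. For $r=1$, $(\Phi_1 f)'\equiv f(\lambda_n,\lambda_{n+1})$ on $[\lambda_n,\lambda_{n+1}]$ by \eqref{eq:formula-phi1}, so $\|\Phi_1 f\|_{L^1_p}^p=\sum_n h_n|f(\lambda_n,\lambda_{n+1})|^p=\|f\|_{\mathrm{eq},L}^p$ \emph{exactly}. For $r=2$, differentiating \eqref{eq:phi2-formula} twice gives $(\Phi_2 f)''(x)=f(\lambda_{n-1},\lambda_n,\lambda_{n+1})\,q''\!\big((x-\lambda_n)/h_n\big)$ on $[\lambda_n,\mu_n]$ (and the mirror expression on $[\mu_{n-1},\lambda_n]$), since the affine part contributes nothing and the factor $h_n^2\cdot h_n^{-2}$ from the chain rule cancels. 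Substituting $u=(x-\lambda_n)/h_n$ and adding the two half-cells around $\lambda_n$ produces the factor $h_{n-1}+h_n=\lambda_{n+1}-\lambda_{n-1}$, so that after the reindexing $m=n-1$,
\[
 \|\Phi_2 f\|_{L^2_p(I)}^p=\Big(\int_0^{1/2}|q''|^p\Big)\sum_n(\lambda_{n+1}-\lambda_{n-1})\,|f(\lambda_{n-1},\lambda_n,\lambda_{n+1})|^p=\Big(\int_0^{1/2}|q''|^p\Big)\,\|f\|_{\mathrm{eq},L}^p .
\]
Since $q''(u)=8-24u$ obeys $|q''|\le 8$ on $[0,\tfrac12]$ (consistent with \eqref{eq:q-conditions}), the factor $\big(\int_0^{1/2}|q''|^p\big)^{1/p}\le 8\cdot 2^{-1/p}\le 8$ is bounded independently of $p$, giving $C'(2)\le 8$. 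This establishes the upper bound and, via the combination in the first paragraph, the quasi-optimality of $\Phi_r$.

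Finally, for part~(b) I would not redo the analysis in $W^r_p$ but instead invoke the general comparison result Theorem~\ref{theorem-relation}, which relates quasi-optimal interpolation in $L^r_p$ and $W^r_p$. Feeding the already-proved $L^r_p$ statement \eqref{eq:equivalent-L} --- together with the quasi-optimal interpolator $\Phi_r$ --- into that theorem yields \eqref{eq:equivalent-W} and the bound $\|\Phi_r\|\le C'(r,K)$ on $W^r_p(I)|_\Lambda$. The hypothesis \eqref{eq:bounded-steps} that the steps $h_n$ are uniformly bounded by $K$ is used exactly here: it is what lets the extra lower-order terms $\sum_{j=0}^{r-1}\sum_n(\lambda_{n+r}-\lambda_n)^{jp+1}|f(\lambda_n,\dots,\lambda_{n+j})|^p$ of $\|f\|_{\mathrm{eq},W}$ from \eqref{eq:equivalent-norms} be controlled uniformly, yielding constants depending only on $r$ and $K$.
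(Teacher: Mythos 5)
Your proof of the theorem as stated is correct, and its overall architecture coincides with the paper's: the upper bound is obtained by computing $\|\Phi_r f\|_{L^r_p(I)}$ exactly from the explicit formulas (your $r=1$ and $r=2$ computations, including the pairing $h_{n-1}+h_n=\lambda_{n+1}-\lambda_{n-1}$ and the $p$-uniform bound on $\bigl(\int_0^{1/2}|q''|^p\bigr)^{1/p}$, match the paper's), the lower bound is an estimate valid for \emph{every} extension $F$ with $F|_\Lambda=f$, and part (\ref{t1-b}) is deduced from part (\ref{t1-a}) by invoking Theorem~\ref{theorem-relation}, exactly as the paper does. The one methodological difference is in the lower bound: the paper quotes Lemma~\ref{df-lower-bound} from Luli's notes, namely $(\lambda_{n+r}-\lambda_n)|F(\lambda_n,\ldots,\lambda_{n+r})|^p \leq [(r-1)!]^{-p}\|F\|^p_{L^r_p[\lambda_n,\lambda_{n+r}]}$, and then sums over $n$ using the covering multiplicity $r$; you instead derive the same estimate from scratch via the Hermite--Genocchi (B-spline) representation of divided differences plus Jensen's inequality. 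This makes your argument self-contained, which is a genuine plus.

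However, one side claim in your lower-bound paragraph is wrong, and it mischaracterizes the open problem. You assert that the pointwise bound $(\lambda_{n+r}-\lambda_n)M_n(t)\le c_r$ ``fails with a spacing-independent constant once $r\ge 3$'' and that this is why the theorem is restricted to $r=1,2$. In fact the bound holds for every $r$ with $c_r=r$: the renormalized B-splines $N_n=\frac{\lambda_{n+r}-\lambda_n}{r}M_n$ form a partition of unity, so $0\le N_n\le 1$, i.e.\ $M_n\le r/(\lambda_{n+r}-\lambda_n)$, no matter how the knots are spaced. Consequently your lower bound $\|f\|_{\text{eq},L}\le C(r)\|F\|_{L^r_p(I)}$ goes through for all $r\ge 1$; this is exactly Proposition~\ref{lower}, which the paper states and proves for arbitrary $r$ (via Lemma~\ref{df-lower-bound}, itself valid for all $r$). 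The actual reason the theorem is limited to $r=1,2$ lies in the opposite direction: for $r\ge 3$ no interpolator is known to satisfy $\|Tf\|_{L^r_p(I)}\le C(r)\|f\|_{\text{eq},L}$, equivalently $\|f\|_{L^r_p(I)|_\Lambda}\le C(r)\|f\|_{\text{eq},L}$, and this is precisely the paper's Question~\ref{quest-1}. The error does not affect the validity of your proof for $r=1,2$, but the diagnosis of the obstruction should be corrected.
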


If the assumption \eqref{eq:bounded-steps} in this Theorem is not fulfilled,
we can always use the following trick. First
fix
some $L > 0$. Let $\{J_k\}$ be the collection of all the intervals
$J_k = [\lambda_k,
\lambda_{k+1}]$ such that $h_k > L$. Given a function $f$ on $\Lambda$,
proceeding as if \eqref{eq:bounded-steps} were true, one obtains an
interpolating
function
$F$ on $I$.
However, $\|F\|_{W^r_p(I)}$ is not comparable with
$\|f\|_{W^r_p(I)|_{\Lambda}}$.

Now one chooses $\mathcal{C}^\infty(\mathbb{R})$ functions $\varphi_k$ such
that $\|\varphi_k\|_{\mathcal{C}^r(\mathbb{R})} \leq M$, $0 \leq \varphi_k(x)
\leq 1$, $\varphi_k(x) = 1$ if $x \notin
\mathring{J_k}$ and $|\operatorname{supp}({\varphi_k}_{|J_k})| \leq L$. Put
$\varphi = \prod \varphi_k$. Then one can see that
$\|\varphi F\|_{W^r_p(I)}$ will be comparable with
$\|f\|_{W^r_p(I)|_{\Lambda}}$.

The second result relates quasi-optimal interpolation on $L^r_p$ and $W^r_p$.
It will be used to deduce part (\ref{t1-b}) of the Theorem above from part
(\ref{t1-a}). It could also be of help in attacking the problem for $r \geq 3$.

\begin{theorem}
\label{theorem-relation}
Let $r \geq 1$ be arbitrary and $1 \leq p < \infty$. Suppose that
\eqref{eq:bounded-steps} holds. The
following statements are true:
\begin{enumerate}[(a)]
 \item
 \label{trr-a}
 Assume that the following equivalence of norms holds:
 \begin{equation}
 \label{eq:trr-*}
  \|f\|_{\text{eq},L} \approx \|f\|_{L^r_p(I)|_\Lambda},\qquad f\in
L^r_p(I)|_\Lambda.
 \end{equation}
 Then the following equivalence of norms also holds:
 \[
 \|f\|_{\text{eq},W} \approx \|f\|_{W^r_p(I)|_\Lambda},\qquad
f\in W^r_p(I)|_\Lambda.
 \]
 The equivalence constants depend only on $r,K$ and the equivalence constants
 in \eqref{eq:trr-*}.
 
 \item
 \label{trr-b}
 If $T: L^r_p(I)|_\Lambda \to L^r_p(I)$ is a quasi-optimal interpolator, then
$T: W^r_p(I)|_\Lambda \to W^r_p(I)$ is also a quasi-optimal interpolator.
\end{enumerate}
\end{theorem}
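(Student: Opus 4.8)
The plan is to match the two pieces of each norm. Writing $\|F\|_{W_p^r(I)}^p=\|F\|_{L^p(I)}^p+\|F\|_{L_p^r(I)}^p$ and, from \eqref{eq:equivalent-norms}, $\|f\|_{\text{eq},W}^p=\|f\|_{\text{eq},L}^p+\|f\|_{\text{low}}^p$ where $\|f\|_{\text{low}}^p=\sum_{j=0}^{r-1}\sum_{n}(\lambda_{n+r}-\lambda_n)^{jp+1}|f(\lambda_n,\dots,\lambda_{n+j})|^p$, one sees that the $L_p^r$/$\,\|\cdot\|_{\text{eq},L}$ parts are already governed by the hypothesis \eqref{eq:trr-*}, so the new content is the equivalence of the $L^p(I)$ part with $\|f\|_{\text{low}}$. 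All estimates are local on the windows $J_n=[\lambda_n,\lambda_{n+r}]$, whose lengths $\ell_n:=\lambda_{n+r}-\lambda_n\le rK$ by \eqref{eq:bounded-steps} and which cover $I$ with overlap at most $r$.

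Two local lemmas carry the argument. (i) A \emph{scale-invariant Poincar\'e inequality}: if $D$ vanishes at every node then $\|D\|_{L^p(J_n)}\le\ell_n^{\,r}\|D^{(r)}\|_{L^p(J_n)}$, with absolute constant. I would prove it by Rolle's theorem --- $D$ has $r+1$ zeros in $J_n$, so each $D^{(j)}$, $0\le j\le r-1$, has a zero $\xi_j\in J_n$ --- and then integrate, $D^{(j)}(x)=\int_{\xi_j}^x D^{(j+1)}$, using $\|g\|_{L^1(J_n)}\le\ell_n^{\,1-1/p}\|g\|_{L^p(J_n)}$ at each of the $r$ steps. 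Crucially only the \emph{existence} of a zero of each derivative is used, so the constant does not degrade when nodes cluster; this is exactly why the one-sided bound \eqref{eq:bounded-steps} suffices. (ii) A bound on the low-order divided differences: for $0\le j\le r-1$ and any $F$ with $F|_\Lambda=f$, $\ell_n^{\,jp+1}|f(\lambda_n,\dots,\lambda_{n+j})|^p\le C(r)^p\bigl(\|F\|_{L^p(J_n)}^p+\ell_n^{\,rp}\|F^{(r)}\|_{L^p(J_n)}^p\bigr)$. I would rescale $J_n$ to $[0,1]$ and combine the Peano (B-spline) representation of the divided difference --- a nonnegative kernel of integral $1/j!$, giving $|G(y_0,\dots,y_j)|\le\|G^{(j)}\|_{L^\infty[0,1]}/j!$ --- with the embedding $W_p^r[0,1]\hookrightarrow C^{r-1}[0,1]$, whose constant depends only on $r$; scaling back reproduces precisely the $n$-th summand of $\|f\|_{\text{low}}^p$.

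Part (\ref{trr-b}) then follows at once. In $\|Tf\|_{W_p^r}^p=\|Tf\|_{L^p(I)}^p+\|Tf\|_{L_p^r}^p$ the second term is $\le\|T\|^p\|f\|_{L_p^r(I)|_\Lambda}^p\le\|T\|^p\|f\|_{W_p^r(I)|_\Lambda}^p$, since the $L_p^r$-trace norm never exceeds the $W_p^r$-trace norm. For the first term I pick a near-optimal extension $F^*$ with $\|F^*\|_{W_p^r}\le2\|f\|_{W_p^r(I)|_\Lambda}$; as $Tf-F^*$ vanishes on $\Lambda$, summing lemma (i) over $n$ (using $\ell_n\le rK$ and the overlap bound) gives $\|Tf-F^*\|_{L^p(I)}\le C(r,K)\,\|(Tf-F^*)^{(r)}\|_{L^p(I)}\le C(r,K)(\|Tf\|_{L_p^r}+\|F^*\|_{L_p^r})$, which is already controlled, whence $\|Tf\|_{L^p(I)}\le C'(r,K)\|f\|_{W_p^r(I)|_\Lambda}$ (in particular $Tf\in W_p^r$). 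For part (\ref{trr-a}), the estimate $\|f\|_{\text{eq},W}\le C\|f\|_{W_p^r(I)|_\Lambda}$ comes by summing lemma (ii) to get $\|f\|_{\text{low}}\le C(r,K)\|F\|_{W_p^r}$ for every extension $F$, adding the lower half of \eqref{eq:trr-*} together with $\|f\|_{L_p^r(I)|_\Lambda}\le\|f\|_{W_p^r(I)|_\Lambda}$, and taking the infimum over $F$. For the reverse inequality I would take a near-optimal $L_p^r$ extension $F_0$ (so $\|F_0\|_{L_p^r}\le C\|f\|_{\text{eq},L}$ by \eqref{eq:trr-*}) and compare it through lemma (i) with a reference interpolant $G$ assembled from the local interpolating polynomials by a partition of unity --- for $r=1,2$ simply $G=\Phi_r f$ --- for which a direct reverse form of lemma (ii) yields $\|G\|_{L^p}+\|G\|_{L_p^r}\le C\|f\|_{\text{eq},W}$ with $\|G\|_{L_p^r}\le C\|f\|_{\text{eq},L}$; then $\|F_0\|_{L^p}\le\|G\|_{L^p}+\|F_0-G\|_{L^p}\le C(r,K)\|f\|_{\text{eq},W}$, so $F_0$ is the desired extension.

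Two points need care. To keep all constants independent of $p$ (as the statement requires) I would carry the $p$-th powers inside every sum and extract the $p$-th root only at the end, so that a factor such as $\ell_n^{\,rp}\le(rK)^{rp}$ contributes just $(rK)^r$ afterwards and the overlap multiplicity $r$ contributes $r^{1/p}\le r$. The genuinely substantive step --- and the only one not already formal given lemmas (i)--(ii) --- is the construction of the reference interpolant $G$ for general $r$: a single $C^{r-1}$ interpolant of the data whose $L^p$ and $L_p^r$ norms are simultaneously controlled, uniformly in the node spacing and in $p$. In the range $r=1,2$ needed for Theorem~\ref{main} this is immediate from the explicit $\Phi_r$, so there the theorem is unconditional; for larger $r$ the partition-of-unity assembly of $G$ (estimating the cutoff derivatives against the divided differences $f(\lambda_n,\dots,\lambda_{n+j})$) is the main thing to make rigorous.
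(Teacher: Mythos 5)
Your part (\ref{trr-b}) argument is correct and complete, and it takes a genuinely different (arguably cleaner) route than the paper's: you compare $Tf$ with a near-optimal $W^r_p$-extension $F^*$ and control the difference, which vanishes on $\Lambda$, by your Rolle--Poincar\'e lemma (i), so the quantity $\|f\|_{\text{eq},W}$ never enters. Likewise the forward half of part (\ref{trr-a}), $\|f\|_{\text{eq},W}\leq C\|f\|_{W^r_p(I)|_\Lambda}$, is fine (it is essentially Proposition~\ref{lower}, with your lemma (ii) playing the role of Lemma~\ref{f-lower-bound}).

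The gap is in the reverse direction of (\ref{trr-a}), $\|f\|_{W^r_p(I)|_\Lambda}\leq C\|f\|_{\text{eq},W}$, exactly where you flag it yourself. Your argument requires a reference interpolant $G$ with $G|_\Lambda=f$ and $\|G\|_{L^p(I)}+\|G\|_{L^r_p(I)}\leq C\|f\|_{\text{eq},W}$, uniformly in the node configuration. For $r=1,2$ you can take $G=\Phi_r f$, but the theorem is asserted for all $r\geq 1$, and for $r\geq 3$ your partition-of-unity construction is only sketched and admitted to be ``the main thing to make rigorous.'' This is not a routine verification: the hypothesis \eqref{eq:trr-*} only supplies extensions with controlled $L^r_p$ \emph{seminorm}, never with controlled $L^p$ norm, so producing such a $G$ amounts to a Whitney-type extension theorem in its own right. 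As written, your proof of (\ref{trr-a}) is complete only in the range $r=1,2$, i.e.\ precisely where Theorem~\ref{main} already does the work.

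The missing idea --- the one the paper uses, and which makes any construction of $G$ unnecessary --- is a Friedrichs-type inequality valid for an \emph{arbitrary} extension: for any $F$ with $F|_\Lambda=f$, any window $J=[\lambda_n,\lambda_{n+r}]$, and \emph{any} points $\xi_0,\dots,\xi_{r-1}\in J$,
\[
\|F\|^p_{L^p(J)}\leq C(r)^p\Bigl[|J|^{rp}\|F\|^p_{L^r_p(J)}+\sum_{j=0}^{r-1}|J|^{jp+1}\bigl|F^{(j)}(\xi_j)\bigr|^p\Bigr]
\]
(this is Lemma~\ref{easy-lemma}, proved by the same integrate-from-a-point iteration as your lemma (i)). One then chooses $\xi_j$ by the mean value theorem for divided differences, so that $F^{(j)}(\xi_j)=f(\lambda_n,\dots,\lambda_{n+j})/j!$; the right-hand side then involves only the data, and summing over $n$ gives $\|F\|^p_{L^p(I)}\leq C^p\bigl[\|F\|^p_{L^r_p(I)}+\|f\|^p_{\text{eq},W}\bigr]$ for \emph{every} extension $F$. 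Applying this to a near-optimal $L^r_p$-extension $F_0$ (whose seminorm is $\leq C\|f\|_{\text{eq},L}\leq C\|f\|_{\text{eq},W}$ by \eqref{eq:trr-*}) immediately yields $\|F_0\|_{W^r_p(I)}\leq C\|f\|_{\text{eq},W}$, closing the argument for all $r\geq1$. Note that your lemma (ii) points the wrong way for this purpose: it bounds the data by the function, whereas here one must bound the function by the data; the combination ``Friedrichs inequality with free points $\xi_j$ plus mean value theorem for divided differences'' is exactly the device that reverses the direction without constructing a competing interpolant.
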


The proofs of Theorems \ref{main} and \ref{theorem-relation} will be given at
the end of the next section.

\section{Proof of the results}

We start by proving that
\begin{equation}
\label{eq:one-sided}
 \|f\|_{\text{eq},X} \leq C\|f\|_{X^r_p(I)|_\Lambda},\qquad X = L,W.
\end{equation}
This will follow from a few easy estimates and is valid for all $r \geq 1$.

\begin{lemma}
\label{f-lower-bound}
Let $r\geq 1$, and $J \subset \mathbb{R}$ be a closed interval with $|J| \leq
K$. Let $x_1,\ldots,x_{k+1} \in J$, $0\leq k \leq r-1$. Then, for
any $F \in W_p^r(J)$,
\[
 \|F\|_{W_p^r(J)}^p \geq C^p|J|^{kp+1}|F(x_1,\ldots,x_{k+1})|^p,
\]
where $C > 0$ is a constant $C = C(r,K)$.
\end{lemma}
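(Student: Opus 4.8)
The plan is to reduce the whole estimate to a single normalized inequality on the unit interval by an affine change of variables, and then to prove that normalized inequality in a way that keeps the constant independent of $p$. Write $J = [a, a+h]$ with $h = |J| \le K$, set $G(t) = F(a + ht)$ for $t \in [0,1]$, and put $y_i = (x_i - a)/h \in [0,1]$. A direct computation gives the scaling relations $\int_J |F|^p = h\int_0^1 |G|^p$ and $\int_J |F^{(r)}|^p = h^{1-rp}\int_0^1 |G^{(r)}|^p$, while the recursive definition of divided differences yields $G(y_1,\dots,y_{k+1}) = h^k F(x_1,\dots,x_{k+1})$. Hence $\|F\|_{W_p^r(J)}^p = h\bigl(\int_0^1|G|^p + h^{-rp}\int_0^1|G^{(r)}|^p\bigr) \ge h\,\min(1,K^{-rp})\,\|G\|_{W_p^r[0,1]}^p$, where I used $h \le K$ to bound $h^{-rp}$ from below. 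Since $\min(1,K^{-rp})^{1/p} = \min(1,K^{-r})$ carries no $p$-dependence, the lemma will follow once I establish, with a constant $C_0$ depending only on $r$, the normalized bound $|G(y_1,\dots,y_{k+1})| \le C_0(r)\,\|G\|_{W_p^r[0,1]}$.

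To prove the normalized bound I would first apply the mean value theorem for divided differences \eqref{eq:mean-value}: because $k \le r-1$, the Sobolev embedding gives $G \in C^{k}[0,1]$, so there is $\xi$ in the smallest interval containing the $y_i$ with $G(y_1,\dots,y_{k+1}) = G^{(k)}(\xi)/k!$, whence $|G(y_1,\dots,y_{k+1})| \le \tfrac{1}{k!}\|G^{(k)}\|_{L^\infty[0,1]}$. It then suffices to control $\|G^{(k)}\|_\infty$ by $\|G\|_{W_p^r[0,1]}$. The device that removes the $p$-dependence is Hölder's inequality on the unit interval, $\|g\|_{L^1[0,1]} \le \|g\|_{L^p[0,1]}$, which lets me replace the target by the weaker $\|G^{(k)}\|_\infty \le C(r)\bigl(\|G\|_{L^1[0,1]} + \|G^{(r)}\|_{L^1[0,1]}\bigr)$, i.e. the embedding $W^{r,1}[0,1] \hookrightarrow C^{r-1}[0,1]$ with a constant depending only on $r$.

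For this $L^1$ embedding I would use Taylor's formula with integral remainder. Writing $P$ for the degree-$(r-1)$ Taylor polynomial of $G$ at $0$, one has $G^{(k)} = P^{(k)} + R_k$ with $R_k(x) = \tfrac{1}{(r-k-1)!}\int_0^x (x-u)^{r-k-1} G^{(r)}(u)\,du$, so that $\|R_k\|_\infty \le \|G^{(r)}\|_{L^1[0,1]}$, since $|x-u|^{r-k-1} \le 1$ and $1/(r-k-1)! \le 1$ on $[0,1]$. In particular $\|G - P\|_{L^1} \le \|R_0\|_\infty \le \|G^{(r)}\|_{L^1}$, whence $\|P\|_{L^1} \le \|G\|_{L^1} + \|G^{(r)}\|_{L^1}$. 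Finally, on the finite-dimensional space of polynomials of degree $\le r-1$ all norms are equivalent, so $\|P^{(k)}\|_\infty \le c(r)\,\|P\|_{L^1[0,1]}$ with $c(r)$ depending only on $r$; combining the three estimates gives $\|G^{(k)}\|_\infty \le (c(r)+1)\bigl(\|G\|_{L^1} + \|G^{(r)}\|_{L^1}\bigr)$, as needed.

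The only genuinely delicate point—and the one I expect to be the main obstacle—is the $p$-independence of the constant that the statement explicitly requires. A direct appeal to the Sobolev embedding $W^{r,p}\hookrightarrow C^{r-1}$ would produce constants degenerating as $p \to \infty$, so the two-step reduction above (first to the $L^1$ norm via Hölder on the unit interval, then to a finite-dimensional norm equivalence on polynomials) is precisely what keeps every constant dependent on $r$ alone. The confluent case, in which some of the $x_i$ coincide, needs no separate treatment, since both \eqref{eq:mean-value} and the bound $|G(y_1,\dots,y_{k+1})| \le \tfrac{1}{k!}\|G^{(k)}\|_\infty$ remain valid there.
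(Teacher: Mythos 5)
Your proposal is correct and follows essentially the same route as the paper's proof: an affine rescaling of $J$ to $[0,1]$, the mean value theorem for divided differences, and a $p$-uniform pointwise bound $|G^{(k)}(\eta)| \le C(r)\,\|G\|_{W_p^r[0,1]}$ on the unit interval, with the same constant $\min\{1,K^{-r}\}$ absorbing the scaling. The only difference is that the paper simply cites this unit-interval bound as ``an easy consequence of the Sobolev embedding theorem'' with a universal constant, whereas you prove it explicitly (H\"older reduction to $L^1$, Taylor remainder, norm equivalence on the finite-dimensional space of polynomials), which has the merit of actually substantiating the $p$-independence of the constant that the paper asserts without detail.
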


\begin{proof}
By the mean value theorem for divided differences (see, for instance,
\cite[Chapter 3.2]{Atkinson}), there is some $\xi \in J$
such that
\begin{equation}
\label{eq:mean-value}
 F^{(k)}(\xi) = \frac{F(x_1,\ldots,x_{k+1})}{k!}.
\end{equation}

It is an easy consequence of the
Sobolev embedding theorem that there is some universal constant $C_0 > 0$
such that if $G \in W_p^r[0,1]$ and $\eta \in [0,1]$, then
\[
 \|G\|_{W_p^r[0,1]} \geq C_0 |G^{(k)}(\eta)|.
\]

Let $\varphi(x)$ be the affine transformation
taking $[0,1]$ to $J$. If $F \in W_p^r(J)$, put $G(x) = F(\varphi(x))$
and compute
\[
\begin{split}
\frac{1}{|J|} \|F\|_{W_p^r(J)}^p &=
\frac{1}{|J|}\int_J \left(
|F(x)|^p + |F^{(r)}(x)|^p \right)dx =
\int_0^1 \left(
|G(x)|^p + |J|^{-rp}|G^{(r)}(x)|^p \right)dx \\
&\geq
C_1^p\|G\|_{W_p^r[0,1]}^p \geq C_1^pC_0^p|G^{(k)}(\varphi^{-1}(\xi))|^p =
C_1^pC_0^p|J|^{kp}|F^{(k)}(\xi)|^p \\ &\geq
C_1^pC_0^p[(r-1)!]^{-p}|J|^{kp}|F(x_0,\ldots,x_{k+1})|^p
\end{split}
\]
where $C_1 = \min\{1,K^{-r}\}$. The lemma holds with
$C = C_1C_0/(r-1)!$.
\end{proof}

Let us now recall the following result from \cite{LuliPreprint}.

\begin{lemmacite}[see \cite{LuliPreprint}]
\label{df-lower-bound}
Let $r \geq 1$, $x_1 < x_2 < \cdots < x_{r+1}$, $J = [x_1,x_{r+1}]$, $F \in
L_p^r(J)$. Then
\[
 |F(x_1,x_2\ldots,x_{r+1})|^p|J| \leq [(r-1)!]^{-p}\|F\|_{L_p^r(J)}^p.
\]
\end{lemmacite}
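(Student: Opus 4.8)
My plan is to write the divided difference as a weighted average of $F^{(r)}$ against a nonnegative kernel and then apply Hölder's inequality, so that the whole estimate is driven by two properties of that kernel. Since the functional $F \mapsto F(x_1,\ldots,x_{r+1})$ annihilates every polynomial of degree $< r$, Peano's kernel theorem (equivalently, the Hermite--Genocchi formula) gives
\[
 F(x_1,\ldots,x_{r+1}) = \int_J K(t)\, F^{(r)}(t)\,dt,\qquad K(t) = \frac{1}{(r-1)!}\,g_t(x_1,\ldots,x_{r+1}),
\]
where $g_t(s) = (s-t)_+^{r-1}$ and $g_t(x_1,\ldots,x_{r+1})$ denotes its divided difference in the $x$--variables (a multiple of the B-spline with knots $x_1,\ldots,x_{r+1}$). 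The base case $r=1$ is simply $F(x_1,x_2) = \frac{1}{|J|}\int_J F'$, i.e.\ $K = |J|^{-1}\mathbf{1}_J$, which already exhibits the two features I will exploit in general.

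Those two features are, first, that $K \geq 0$ with $\int_J K(t)\,dt = 1/r!$ (the normalization is read off by testing the representation on $F(t) = t^r/r!$, for which $F^{(r)} \equiv 1$ and $F(x_1,\ldots,x_{r+1}) = 1/r!$), and second, the pointwise bound
\[
 \|K\|_\infty \leq \frac{1}{(r-1)!\,|J|}.
\]
This is exactly the classical statement that the normalized B-spline $N(t) := |J|\,g_t(x_1,\ldots,x_{r+1})$ satisfies $0 \leq N \leq 1$, which follows from the partition-of-unity property of B-splines; one checks it by hand for $r=1,2$ and obtains it in general from the B-spline recurrence. Granting this, the rest is routine: with $1/p+1/p'=1$ and $K = K^{1/p'}K^{1/p}$, Hölder gives
\[
 |F(x_1,\ldots,x_{r+1})| \leq \int_J K|F^{(r)}| \leq \Big(\int_J K\Big)^{1/p'}\Big(\int_J K|F^{(r)}|^p\Big)^{1/p} \leq \Big(\tfrac{1}{r!}\Big)^{1/p'}\|K\|_\infty^{1/p}\|F^{(r)}\|_{L_p(J)}.
\]
Raising to the $p$-th power, inserting $\|K\|_\infty$, and multiplying by $|J|$ yields
\[
 |F(x_1,\ldots,x_{r+1})|^p|J| \leq \Big(\tfrac{1}{r!}\Big)^{p-1}\frac{1}{(r-1)!}\,\|F^{(r)}\|_{L_p(J)}^p = \Big(\tfrac{1}{r}\Big)^{p-1}\frac{1}{[(r-1)!]^p}\,\|F\|_{L_p^r(J)}^p,
\]
and since $(1/r)^{p-1} \leq 1$ for $r \geq 1$, $p \geq 1$, the claimed inequality follows, with the factor $(1/r)^{p-1}$ even to spare.

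The main obstacle is precisely the sup-norm bound on the kernel: every other step is a one-line computation, but that inequality is what pins down the sharp constant $[(r-1)!]^{-p}$ and is the only place where genuine B-spline theory (or an induction that reproduces it) enters. Note that at $p=1$ the chain above collapses to $|F(x_1,\ldots,x_{r+1})|\,|J| \leq \|K\|_\infty\|F^{(r)}\|_{L_1(J)}$ and is sharp, so no slack in $\|K\|_\infty$ can be afforded there; this is why I would insist on the exact constant $\|K\|_\infty \le \frac{1}{(r-1)!\,|J|}$ rather than some cruder majorant of the B-spline.
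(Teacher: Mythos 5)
The paper itself contains no proof of this statement: it is quoted as a known result (Lemma~A) from Luli's preprint \cite{LuliPreprint}, so there is no internal argument to compare yours against, and your proof has to stand on its own. It does. The Peano-kernel representation $F(x_1,\ldots,x_{r+1}) = \int_J K(t)F^{(r)}(t)\,dt$ with $K(t) = \tfrac{1}{(r-1)!}\,g_t(x_1,\ldots,x_{r+1})$ is legitimate for $F \in L^r_p(J)$: one writes Taylor's formula with integral remainder at $x_1$, applies the divided difference (which kills the polynomial part, of degree $<r$), and interchanges the finite sum with the integral, which only needs $F^{(r)} \in L^1(J)$ --- automatic here since $J$ is bounded and $p \geq 1$. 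Your normalization $\int_J K = 1/r!$ is correctly read off by testing on $t^r/r!$, and the crucial sup bound $\|K\|_\infty \leq \tfrac{1}{(r-1)!\,|J|}$ is exactly the statement that the normalized B-spline $N(t) = |J|\,g_t(x_1,\ldots,x_{r+1})$ satisfies $0 \leq N \leq 1$, which follows from nonnegativity plus the partition-of-unity identity once the knots $x_1,\ldots,x_{r+1}$ are embedded in a biinfinite knot sequence; you are right to flag this as the one genuinely nontrivial ingredient, since at $p=1$ there is no slack. The Hölder splitting $K = K^{1/p'}K^{1/p}$ then closes the argument, and your bookkeeping is correct: $(1/r!)^{p-1}\tfrac{1}{(r-1)!} = r^{-(p-1)}[(r-1)!]^{-p}$, so you in fact obtain the constant $r^{-(p-1)}[(r-1)!]^{-p}$, slightly stronger than the stated $[(r-1)!]^{-p}$. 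This kernel/B-spline route is the natural proof of such an inequality (and is of the same flavor as what one finds in the cited source), so nothing here departs from or conflicts with the way the lemma is used in the paper.
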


With these two Lemmas, now we can prove the following Proposition, which
will give \eqref{eq:one-sided}.

\begin{proposition}
\label{lower}
Let $r \geq 1$. 
Put $f = F|_\Lambda$ and define $\|f\|_{\text{eq},L}$ and
$\|f\|_{\text{eq},W}$ from \eqref{eq:equivalent-norms}.
The following statements hold:
\begin{enumerate}[(a)]
\item \label{norm-lower-homog}
If $F \in L_p^r(I)$, then
$
 \|F\|_{L_p^r(I)} \geq C\|f\|_{\text{eq},L}
$
for some constant $C > 0$ depending only on $r$.
 \item \label{norm-lower}
If \eqref{eq:bounded-steps} holds and $F \in W_p^r(I)$, then
$
 \|F\|_{W_p^r(I)} \geq C'\|f\|_{\text{eq},W}
$
for some constant $C' > 0$ depending only on $r,K$.
\end{enumerate}
\end{proposition}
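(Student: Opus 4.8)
The plan is to deduce both parts directly from the two lemmas just proved, summing the pointwise divided-difference estimates over all $n \in \mathbb{Z}$. For part~(\ref{norm-lower-homog}), I would apply Lemmacite~\ref{df-lower-bound} on each interval $J_n = [\lambda_n, \lambda_{n+r}]$, taking the $r+1$ nodes $\lambda_n, \ldots, \lambda_{n+r}$. This gives, for each $n$,
\[
(\lambda_{n+r} - \lambda_n)\,|f(\lambda_n,\ldots,\lambda_{n+r})|^p
\leq [(r-1)!]^{-p}\,\|F\|_{L_p^r(J_n)}^p.
\]
The subtlety is that the intervals $J_n$ overlap, so summing in $n$ does not reproduce $\|F\|_{L_p^r(I)}^p$ exactly; rather, each subinterval $[\lambda_m,\lambda_{m+1}]$ is covered by exactly $r$ of the $J_n$'s (namely those with $n$ from $m-r+1$ to $m$). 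Hence $\sum_n \|F\|_{L_p^r(J_n)}^p \leq r\,\|F\|_{L_p^r(I)}^p$, and summing the displayed inequality yields $\|f\|_{\text{eq},L}^p \leq r\,[(r-1)!]^{-p}\,\|F\|_{L_p^r(I)}^p$, which is part~(\ref{norm-lower-homog}) with $C = r^{-1/p}(r-1)!$.

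For part~(\ref{norm-lower}), recall that $\|f\|_{\text{eq},W}^p$ equals $\|f\|_{\text{eq},L}^p$ plus the extra terms
\[
\sum_{j=0}^{r-1}\sum_{n\in\mathbb{Z}} (\lambda_{n+r}-\lambda_n)^{jp+1}\,|f(\lambda_n,\ldots,\lambda_{n+j})|^p.
\]
The first summand is controlled by $\|F\|_{L_p^r(I)}^p \leq \|F\|_{W_p^r(I)}^p$ via part~(\ref{norm-lower-homog}). For each extra term I would apply Lemma~\ref{f-lower-bound} on $J_n = [\lambda_n,\lambda_{n+r}]$ with the $k+1 = j+1$ nodes $\lambda_n,\ldots,\lambda_{n+j}$: since the steps are uniformly bounded, $|J_n| = \lambda_{n+r}-\lambda_n \leq rK$, so the hypothesis $|J_n| \leq rK$ holds, and the lemma gives
\[
(\lambda_{n+r}-\lambda_n)^{jp+1}\,|f(\lambda_n,\ldots,\lambda_{n+j})|^p
\leq C(r,rK)^{-p}\,\|F\|_{W_p^r(J_n)}^p.
\]
Summing over $n$ for fixed $j$ again introduces the overlap factor $r$, and summing the finitely many values $j = 0,\ldots,r-1$ contributes a further factor. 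Combining, $\|f\|_{\text{eq},W}^p \leq C' \|F\|_{W_p^r(I)}^p$ with $C'$ depending only on $r$ and $K$, as claimed.

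The one point requiring care — the main obstacle, though a mild one — is the bookkeeping of the overlapping intervals $J_n$ and verifying that each $[\lambda_m,\lambda_{m+1}]$ lies in exactly $r$ of them, so that the sum $\sum_n \|F\|_{X_p^r(J_n)}^p$ is bounded by $r\,\|F\|_{X_p^r(I)}^p$ rather than something larger. One must also check that $\|F\|_{W_p^r(J_n)}^p \leq \|F\|_{W_p^r(I)}^p$ termwise (immediate, as $J_n \subset I$) and that the uniform step bound~\eqref{eq:bounded-steps} indeed forces $|J_n| \leq rK$, which is the precise hypothesis needed to invoke Lemma~\ref{f-lower-bound} with a constant independent of $n$. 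Everything else is a routine sum of the two pointwise lemmas.
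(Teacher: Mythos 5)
Your proposal is correct and is essentially the paper's own proof: both parts are obtained by applying Lemma~\ref{df-lower-bound} (for the $r$-th divided differences) and Lemma~\ref{f-lower-bound} (for the lower-order terms, using $|[\lambda_n,\lambda_{n+r}]|\leq rK$) on the overlapping intervals $[\lambda_n,\lambda_{n+r}]$ and summing, with the paper encoding your overlap bookkeeping as the identity $\|F\|^p_{X^r_p(I)}=\frac{1}{r}\sum_{n\in\mathbb{Z}}\|F\|^p_{X^r_p[\lambda_n,\lambda_{n+r}]}$. The only cosmetic difference is your constant $r^{-1/p}(r-1)!$ in part (a), which you should replace by the $p$-independent bound $(r-1)!/r$ to match the claim that the constants depend only on $r$ (and $r,K$).
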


\begin{proof}
If $F \in L_p^r(I)$, we have
\[
\|F\|^p_{L_p^r(I)} =
\frac{1}{r}\sum_{n\in\mathbb{Z}}\|F\|^p_{L_p^r[\lambda_n,\lambda_{n+r}]} \geq
\frac{[(r-1)!]^p}{r}\sum_{n\in\mathbb{Z}}(\lambda_{n+r} -
\lambda_n)|F(\lambda_n,\ldots, \lambda_ { n+r }
)|^p,
\]
by Lemma~\ref{df-lower-bound}. This implies (\ref{norm-lower-homog}), with
$C = (r-1)!/r$.

To prove (\ref{norm-lower}), fix $k = 0,1,\ldots,r-1$ and use
Lemma~\ref{f-lower-bound} to obtain
\[
 \|F\|^p_{W_p^r(I)} = \frac{1}{r}
\sum_{n\in\mathbb{Z}}\|F\|^p_{W_p^r[\lambda_n,\lambda_{n+r}]} \geq
\frac{C''^p}{r}\sum_{n\in\mathbb{Z}} (\lambda_{n+r} -
\lambda_n)^{kp+1}|F(\lambda_n,\ldots,\lambda_{n+k})|^p.
\]
By summing these inequalities over $k = 0,\ldots,r-1$ together with
(\ref{norm-lower-homog}), we obtain (\ref{norm-lower}).
\end{proof}

Our goal now is to prove Theorem~\ref{theorem-relation}. We will need the
following Lemma, which is related to Friedrichs'
inequality.

\begin{lemma}
\label{easy-lemma}
Let $J \subset \mathbb{R}$ be an interval of finite length. If $F
\in L^r_p(J)$ and $\xi_0,\ldots,\xi_{r-1} \in J$, then
\[
\|F\|^p_{L^p(J)} \leq C^p \left[
|J|^{rp}\|F\|^p_{L^r_p(J)} +
\sum_{j=0}^{r-1} |J|^{jp+1}|F^{(j)}(\xi_j)|^p \right]
,
\]
for some constant $C = C(r)$.
\end{lemma}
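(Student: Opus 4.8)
The plan is to prove this Friedrichs-type inequality by reducing to the unit interval via affine scaling, exactly as in the proof of Lemma~\ref{f-lower-bound}, and then establishing the corresponding estimate on $[0,1]$ by a telescoping argument based on the fundamental theorem of calculus.

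First I would let $\varphi$ be the increasing affine map from $[0,1]$ onto $J$ and set $G = F \circ \varphi$. Writing $h = |J|$, one has $G^{(j)}(t) = h^j F^{(j)}(\varphi(t))$, so that each term transforms homogeneously: $\|F\|_{L^p(J)}^p = h\|G\|_{L^p[0,1]}^p$, the quantity $|J|^{rp}\|F\|_{L^r_p(J)}^p$ equals $h\|G^{(r)}\|_{L^p[0,1]}^p$, and $|J|^{jp+1}|F^{(j)}(\xi_j)|^p = h|G^{(j)}(\eta_j)|^p$, where $\eta_j = \varphi^{-1}(\xi_j) \in [0,1]$. Thus the claimed inequality is equivalent, after dividing by $h$, to the scale-free estimate
\[
\|G\|_{L^p[0,1]}^p \le C^p\Big[\|G^{(r)}\|_{L^p[0,1]}^p + \sum_{j=0}^{r-1}|G^{(j)}(\eta_j)|^p\Big],
\]
and it suffices to produce a constant $C = C(r)$ that is uniform over all choices of $\eta_0,\ldots,\eta_{r-1} \in [0,1]$.

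To prove the unit-interval estimate I would bound $G$ in the sup norm by descending induction on the order of the derivative. Since $F \in L^r_p$, the function $F^{(r-1)}$ is absolutely continuous, so for each $j$ and each $x \in [0,1]$ we may write $G^{(j)}(x) = G^{(j)}(\eta_j) + \int_{\eta_j}^x G^{(j+1)}$, whence $\|G^{(j)}\|_{L^\infty[0,1]} \le |G^{(j)}(\eta_j)| + \|G^{(j+1)}\|_{L^1[0,1]}$. Because the interval has length one, Hölder's inequality gives $\|G^{(j+1)}\|_{L^1[0,1]} \le \|G^{(j+1)}\|_{L^p[0,1]} \le \|G^{(j+1)}\|_{L^\infty[0,1]}$ for $j+1 \le r-1$, while for $j = r-1$ we stop at the $L^p$ norm of $G^{(r)}$. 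Telescoping these inequalities from $j = r-1$ down to $j = 0$ yields
\[
\|G\|_{L^\infty[0,1]} \le \sum_{j=0}^{r-1}|G^{(j)}(\eta_j)| + \|G^{(r)}\|_{L^p[0,1]}.
\]
Raising to the $p$-th power, applying the elementary convexity bound $\big(\sum_{i=1}^{r+1} a_i\big)^p \le (r+1)^{p-1}\sum_{i=1}^{r+1} a_i^p$, and using $\|G\|_{L^p[0,1]} \le \|G\|_{L^\infty[0,1]}$ gives the scale-free estimate with $C^p = (r+1)^{p-1}$; since $(r+1)^{(p-1)/p} \le r+1$, the constant $C = r+1$ works uniformly in $p$ as well.

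The argument is essentially routine; the only point requiring care is the uniformity of $C$ in the evaluation points $\xi_j$ and in $p$. The telescoping through the $L^\infty$ norm handles the $\xi_j$-uniformity automatically, since the length-one normalization makes all the embedding constants equal to one regardless of where the $\eta_j$ lie, and the explicit bound $(r+1)^{(p-1)/p} \le r+1$ removes the $p$-dependence, consistent with the $p$-independence of the constants claimed throughout the paper. I would also take care to invoke the Sobolev embedding to justify that the point values $F^{(j)}(\xi_j)$, $0 \le j \le r-1$, are well defined, which is what makes the right-hand side meaningful in the first place.
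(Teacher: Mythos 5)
Your proof is correct and takes essentially the same route as the paper's: an iterated application of the fundamental theorem of calculus anchored at the points $\xi_j$, followed by H\"older's inequality and the convexity bound $\left(\sum_{i} a_i\right)^p \le (r+1)^{p-1}\sum_i a_i^p$, then passing from a sup bound to the $L^p$ norm. The only difference is cosmetic: you normalize to the unit interval by affine rescaling so that all powers of $|J|$ are absorbed into the scaling identities, whereas the paper carries those powers directly through the telescoping on $J$; both yield a constant depending only on $r$, uniform in $p$ and in the choice of the points $\xi_j$.
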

\begin{proof}
We have, for $j = 0,\ldots,r-1$,
\[
 F^{(j)}(x) = F^{(j)}(\xi_j) + \int_{\xi_j}^x F^{(j+1)}(t)\,dt,
\]
so that
\[
|F^{(j)}(x)| \leq |F^{(j)}(\xi_j)| + \int_{J} |F^{(j+1)}(t)|\,dt.
\]

Repeated application of this inequality yields
\[
\begin{split}
 |F(x)| &\leq |F(\xi_0)| + \int_{J} |F'(t)|\,dt 
 \leq |F(\xi_0)| + |J||F'(\xi_1)| + |J|\int_{J} |F''(t)|\,dt \\
 &\leq \cdots
\leq
 \sum_{j=0}^{r-1} |J|^j|F^{(j)}(\xi_j)| + |J|^{r-1}\int_{J}
|F^{(r)}(t)|\,dt
\leq
 \sum_{j=0}^{r-1} |J|^j|F^{(j)}(\xi_j)| + |J|^{r-1
+\frac{p-1}{p}}\|F\|_{L^r_p(J)},
\end{split}
\]
where the last inequality comes from H\"older's inequality.
Using H\"older's inequality for sums,
we see that
\[
 |F(x)|^p \leq (r+1)^{p-1} \left[
 \sum_{j=0}^{r-1} |J|^{jp}|F^{(j)}(\xi_j)|^p + |J|^{rp - 1}\|F\|^p_{L^r_p(J)}
 \right].
\]
Now the lemma follows by integrating this inequality.
\end{proof}

Now we can give the proof of Theorem~\ref{theorem-relation}.

\begin{proof}[Proof of Theorem~\ref{theorem-relation}]

Let $F \in W^r_p(I)$ and 
fix $n \in \mathbb{Z}$. By the mean value theorem for divided differences, we
can choose $\xi_0,\ldots,\xi_{r-1} \in [\lambda_n,\lambda_{n+r}]$ such that
\[
 F^{(j)}(\xi_j) = \frac{F(\lambda_n,\ldots,\lambda_{n+j})}{j!},\qquad
j=0,\ldots,r-1.
\]
By Lemma~\ref{easy-lemma},
\[
\begin{split}
 \|F\|^p_{L^p[\lambda_n,\lambda_{n+r}]} &\leq C_0^p \left[
 (\lambda_{n+r} - \lambda_n)^{rp}\|F\|^p_{L^r_p[\lambda_n,\lambda_{n+r}]}
 + \sum_{j=0}^{r-1}
(\lambda_{n+r}-\lambda_n)^{jp+1}\frac{|F(\lambda_n,\ldots,\lambda_{n+j})|^p}{j!}
\right]
\\
&\leq
C_1^p \left[
\|F\|^p_{L^r_p[\lambda_n,\lambda_{n+r}]}
 + \sum_{j=0}^{r-1}
(\lambda_{n+r}-\lambda_n)^{jp+1}|F(\lambda_n,\ldots,\lambda_{n+j})|^p
\right],
\end{split}\]
where $C_0 = C_0(r)$ comes from Lemma~\ref{easy-lemma} and
$C_1 = C_1(r,K) = C_0\cdot\max\{1,(rK)^{r}\}$
(see \eqref{eq:bounded-steps}). By summing over $n
\in \mathbb{Z}$, using $F|_\Lambda = f$, and by the definition of
$\|f\|_{\text{eq},W}$, we get
\begin{equation}
\label{eq:theorem-relation-*}
\|F\|^p_{L^p(I)} \leq C_1^p \left[
\|F\|^p_{L^r_p(I)} + \|f\|^p_{\text{eq},W}\right].
\end{equation}

Now assume that \eqref{eq:trr-*} holds. Given $f \in
W^r_p(I)|_\Lambda$, choose $F \in L^r_p(I)$ with $F|_\Lambda = f$ and
$\|F\|_{L^r_p(I)} \leq 2\|f\|_{L^r_p(I)|_\Lambda}$.
Then,
\[
 \|F\|^p_{L^r_p(I)} \leq 2^p\|f\|^p_{L^r_p(I)|_\Lambda} \leq
C^p\|f\|^p_{\text{eq},L} \leq C^p\|f\|^p_{\text{eq},W},
\]
by Proposition~\ref{lower}. Applying \eqref{eq:theorem-relation-*}, we have
\[
\begin{split}
 \|F\|^p_{W^r_p(I)} &=
\|F\|^p_{L^p(I)} + \|F\|^p_{L^r_p(I)} \leq (C_1^p+1)\|F\|^p_{L^r_p(I)} +
C_1^p\|f\|^p_{\text{eq},W}
 \leq (C^pC_1^p + C^p +
C_1^p)\|f\|^p_{\text{eq},W}.
\end{split}
\]

Hence, by definition of the trace norm, one obtains
\[
 \|f\|^p_{W^r_p(W)|_\Lambda} \leq (C^pC_1^p + C^p +
C_1^p)\|f\|^p_{\text{eq},W}.
\]
The reverse inequality comes from
Proposition~\ref{lower}, so that this proves (\ref{trr-a}).

To prove (\ref{trr-b}), assume that $T:L^r_p(I)|_\Lambda \to L^r_p(I)$ is
quasi-optimal and let $\|T\|$ be the norm of $T$ as an operator
$L^r_p(I)|_\Lambda \to L^r_p(I)$. Observe that $W^r_p(I)|_\Lambda \subset
L^r_p(I)|_\Lambda$. Also, if $f \in L^r_p(I)|_\Lambda$, $Tf$ is locally in
$W^r_p$, because $Tf \in L^r_p(I)$. Hence, $T$ will make sense as an operator
taking $W^r_p(I)|_\Lambda$ into $W^r_p(I)$ if we can prove that
$\|Tf\|_{W^r_p(I)}$ is finite whenever $f \in W^r_p(I)|_\Lambda$.

If $f \in
W^r_p(I)|_\Lambda$, we use
\eqref{eq:theorem-relation-*} and Proposition~\ref{lower} to obtain
\[
\begin{split}
 \|Tf\|^p_{W^r_p(I)} &=
\|Tf\|^p_{L^p(I)} + \|Tf\|^p_{L^r_p(I)} \leq (C_1^p+1)\|Tf\|^p_{L^r_p(I)} +
C_1^p\|f\|^p_{\text{eq},W}\\
&\leq
(C_1^p+1)\|T\|^p\|f\|^p_{L^r_p(I)|_\Lambda} +
C_1^pC'^{-p}\|f\|^p_{W^r_p(I)|_\Lambda} \\ &\leq 
((C_1^p+1)\|T\|^p + C_1^pC'^{-p})\|f\|^p_{W^r_p(I)|_\Lambda}.
\end{split}
\]
Here we have used $\|f\|_{L^r_p(I)|_\Lambda} \leq
\|f\|_{W^r_p(I)|_\Lambda}$, which is trivial. This shows that
$T:W^r_p(I)|_\Lambda \to W^r_p(I)$ is quasi-optimal.
\end{proof}

Now we can give the proof of Theorem~\ref{main}.

\begin{proof}[Proof of Theorem~\ref{main}]
Observe that for any $f : \Lambda \to \mathbb{C}$, the function $\Phi_r f$ is
locally in $L^r_p$, because it is of class $\mathcal{C}^{r-1}$ and piecewise
$\mathcal{C}^r$. Hence, to check that $\Phi_r(L^r_p(I)|_\Lambda) \subset
L^r_p(I)$, it is enough to see that $\|\Phi_r f\|_{L^r_p(I)}$ is finite for any
$f \in L^r_p(I)|_\Lambda$.

Therefore, we only need to prove that
\begin{equation}
\label{eq:needed-ineq}
 \|\Phi_r f\|_{L^r_p(I)} \leq C(r)\|f\|_{\text{eq},L},\qquad r = 1,2.
\end{equation}
Then we will have
\[
 \|f\|_{L^r_p(I)|_\Lambda} \leq \|\Phi_r f\|_{L^r_p(I)} \leq
C(r)\|f\|_{\text{eq},L} \leq C'(r)\|f\|_{L^r_p(I)|_\Lambda},
\]
where the last inequality comes from
Proposition~\ref{lower}, so (\ref{t1-a}) will follow. Using
Theorem~\ref{theorem-relation}, (\ref{t1-b}) follows from (\ref{t1-a}), because
the fact that
\[\|\Phi_r\|_{W^r_p(I)|_\Lambda \to W^r_p(I)} \leq C'(r,K),\qquad r = 1,2\]
is contained in its proof.

Inequality \eqref{eq:needed-ineq} will follow from some easy
computations
using the formulas given in Section~\ref{definition-interp}. The analogous
estimate for
$W^r_p(I)$ can also be obtained directly by the same means instead of appealing
to Theorem~\ref{theorem-relation}.

We first deal with the case $r = 1$.
We compute $\|\Phi_1 f\|_{L^1_p[\lambda_n,\lambda_{n+1}]}$ using formula
\eqref{eq:formula-phi1}:
\[
 \|\Phi_1 f\|_{L^1_p[\lambda_n,\lambda_{n+1}]}^p =
|f(\lambda_n,\lambda_{n+1})|^p\|1\|^p_{L^p[\lambda_n,\lambda_{n+1}]} =
 (\lambda_{n+1}-\lambda_n)|f(\lambda_n,\lambda_{n+1})|^p.
\]
By summing over $n \in \mathbb{Z}$, we obtain
\[
 \|\Phi_1 f\|_{L^1_p(I)}^p = \|f\|^p_{\text{eq},L}.
\]

For the case $r = 2$, we compute
$\|\Phi_2 f\|^p_{L^2_p[\lambda_n,\mu_n]}$ using
formula \eqref{eq:phi2-formula}. We have, for $x \in [\lambda_n,\mu_n]$,
\[
 (\Phi_2 f)''(x) =
f(\lambda_{n-1},\lambda_n,\lambda_{n+1})q''\left(\frac{x-\lambda_n}{h_n}\right).
\]
Making the change of variables $y = (x-\lambda_n)/h_n$, we compute
\[
\left\|q''\left(\tfrac{x-\lambda_n}{h_n}\right)\right\|^p_{L^p[\lambda_n,\mu_n]}
=
h_n\|q''\|^p_{L^p[0,1/2]}
\leq
2^{p-1}\|q''\|^p_{L^1[0,1/2]} h_n
\leq M^p h_n,
\]
where we have used Jensen's inequality, and 
$M = 2\|q''\|_{L^1[0,1/2]}$ is some universal constant.

Hence,
\begin{equation}
\label{eq:2-+}
\|\Phi_2 f\|^p_{L^2_p[\lambda_n,\mu_n]} =
M^ph_n|f(\lambda_{n-1},\lambda_n,\lambda_{n+1})|^p.
\end{equation}
Proceeding analogously, we obtain
\begin{equation}
\label{eq:2-++}
\|\Phi_2 f\|^p_{L^2_p[\mu_{n-1},\lambda_n]} =
M^ph_{n-1}|f(\lambda_{n-1},\lambda_n,\lambda_{n+1})|^p.
\end{equation}
By summing inequalities \eqref{eq:2-+} and \eqref{eq:2-++} over $n \in
\mathbb{Z}$ and using $h_{n-1} + h_n = \lambda_{n+1} - \lambda_{n-1}$, we find
that
\[
 \|\Phi_2 f\|^p_{L^2_p(I)} = M^p\|f\|^p_{\text{eq}, L}.
\]
This proves the Theorem.
\end{proof}

\section{Simplification of the expression for $\|f\|_{\mathrm{eq},W}$}
\label{simplification}

In this Section, we examine whether the expression for $\|f\|_{\text{eq},W}$
given in \eqref{eq:equivalent-norms} can be simplified by eliminating the terms
corresponding to $1 \leq j \leq r-1$. Throughout the Section we will assume
that \eqref{eq:bounded-steps} holds.

The next proposition proves that the answer is affirmative for $r =
2$. Its proof reduces to doing some easy but somewhat lengthy estimates on the
divided differences.

\begin{proposition}
 \label{simpler}
Let $r = 2$ and assume that \eqref{eq:bounded-steps} holds. Define
$\|f\|_{\text{eq},W}$ and $\|f\|_{\text{simp},W}$ from
\eqref{eq:equivalent-norms}
and \eqref{eq:simple-W}. Then we have the equivalence of norms
\[
 \|f\|_{\text{eq},W} \approx \|f\|_{\text{simp},W}.
\]
The equivalence constants depend only on $K$.
\end{proposition}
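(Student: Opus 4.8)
The plan is to write both norms in a form that isolates their common part and then establish two one-sided estimates. For $r=2$, formula \eqref{eq:equivalent-norms} reads $\|f\|^p_{\text{eq},W} = A + B_0 + B_1$, where $A = \|f\|^p_{\text{eq},L}$,
\[
 B_0 = \sum_{n\in\mathbb{Z}}(\lambda_{n+2}-\lambda_n)|f(\lambda_n)|^p,\qquad B_1 = \sum_{n\in\mathbb{Z}}(\lambda_{n+2}-\lambda_n)^{p+1}|f(\lambda_n,\lambda_{n+1})|^p
\]
are the terms $j=0,1$, while \eqref{eq:simple-W} reads $\|f\|^p_{\text{simp},W} = A + S_0$ with $S_0 = \sum_n (\lambda_{n+1}-\lambda_{n-1})|f(\lambda_n)|^p$. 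Since the homogeneous part $A$ is common to both, the claimed equivalence $\|f\|_{\text{eq},W}\approx\|f\|_{\text{simp},W}$ is equivalent to the pair of bounds $B_0+B_1\le C(A+S_0)$ and $S_0\le C(A+B_0+B_1)$, with $C=C(K)$. Thus the whole content of the Proposition is that the first--divided--difference term $B_1$ can be traded against the function--value term and the homogeneous term.

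The bound $S_0\le C(A+B_0+B_1)$ is the easy direction and needs only elementary manipulations. Writing $\lambda_{n+1}-\lambda_{n-1}=h_{n-1}+h_n$ and splitting, $S_0$ differs from $B_0$ only through the sum $\sum_n h_{n-1}|f(\lambda_n)|^p=\sum_m h_m|f(\lambda_{m+1})|^p$; inserting $f(\lambda_{m+1})=f(\lambda_m)+h_m f(\lambda_m,\lambda_{m+1})$ and applying the elementary inequality $|a+b|^p\le 2^{p-1}(|a|^p+|b|^p)$ gives $\sum_m h_m|f(\lambda_{m+1})|^p\le 2^{p-1}\big(\sum_m h_m|f(\lambda_m)|^p+\sum_m h_m^{p+1}|f(\lambda_m,\lambda_{m+1})|^p\big)$. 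Since $h_m^{p+1}\le(\lambda_{m+2}-\lambda_m)^{p+1}$ and $h_m\le \lambda_{m+2}-\lambda_m$, the two sums are controlled by $B_0$ and $B_1$ respectively, so $S_0\lesssim B_0+B_1$. This half does not even use \eqref{eq:bounded-steps}.

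The bound $B_0+B_1\le C(A+S_0)$ is the crux, because the simplified norm no longer carries any first--divided--difference information, so $B_1$ and the mismatch between the right--shifted weights of $B_0$ and the centered weights of $S_0$ must both be absorbed into $A+S_0$. The key algebraic tool is the Newton identity $f(\lambda_n,\lambda_{n+1}) = f(\lambda_n,\lambda_{n+2}) - (\lambda_{n+2}-\lambda_{n+1})\,f(\lambda_n,\lambda_{n+1},\lambda_{n+2})$ (and its left--hand mirror), which splits each first difference into a slope over a \emph{longer} window and a weighted second difference. Multiplying the second piece by the weight $(\lambda_{n+2}-\lambda_n)^{p+1}$ and using \eqref{eq:bounded-steps} to bound every window length by $2K$, one checks that it contributes at most $C(K)\,A$; the first piece reduces matters to estimating $\sum_n(\lambda_{n+2}-\lambda_n)|f(\lambda_n,\lambda_{n+2})|^p$ together with a few shifted function--value sums by $A+S_0$. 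Morally this is a discrete Gagliardo--Nirenberg inequality, the analogue of $\|F'\|_{L^p(J)}\lesssim |J|^{-1}\|F\|_{L^p(J)}+|J|\,\|F''\|_{L^p(J)}$ on an interval of length $|J|\le 2K$, which is exactly what \emph{cannot} be read off from the crude estimate $|f(\lambda_n,\lambda_{n+1})|\le(|f(\lambda_n)|+|f(\lambda_{n+1})|)/(\lambda_{n+1}-\lambda_n)$ when $\lambda_{n+1}-\lambda_n$ is tiny.

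The main obstacle is therefore precisely this interpolation estimate in the regime of highly non--uniform steps: a first difference over a short interval can be enormous even for bounded data, so no pointwise triangle--inequality bound suffices, and one must argue globally. I expect to proceed by a case analysis at each index comparing a long step with its neighbors: when a neighboring step is comparably long the offending term is charged to $S_0$ (which carries the weight $\lambda_{n+1}-\lambda_{n-1}$ on the neighboring node), while otherwise an isolated long gap forces a large second divided difference whose contribution to $A$ dominates the term, the estimate $\lambda_{n+2}-\lambda_n\le 2K$ being used repeatedly to convert powers of step lengths. An equivalent and possibly cleaner route is to bound $\|\Phi_2 f\|_{L^p(I)}$ directly from the explicit formula \eqref{eq:phi2-formula}: this reduces to the single inequality $\sum_n(\lambda_{n+1}-\lambda_n)^{p+1}|\alpha_n(f)|^p\lesssim A+S_0$ for the spline slopes $\alpha_n(f)$ --- again the same interpolation inequality --- after which $\|f\|_{W_p^2(I)|_\Lambda}\approx\|f\|_{\text{eq},W}$ from Theorem~\ref{main} and the lower bounds of Proposition~\ref{lower} close the argument.
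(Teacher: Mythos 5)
Your first inequality ($\|f\|_{\text{simp},W}\lesssim\|f\|_{\text{eq},W}$) is complete and is essentially the paper's own computation, and your structural reduction of the Proposition to the two bounds $S_0\le C(A+B_0+B_1)$ and $B_0+B_1\le C(K)(A+S_0)$ is correct. But the second bound is where the Proposition lives, and there you have located the difficulty without resolving it: the decisive estimate is announced (``I expect to proceed by a case analysis\dots'') rather than proved, and the one concrete reduction you commit to does not make progress on it. After your Newton identity, the surviving piece is $(\lambda_{n+2}-\lambda_n)^{p+1}|f(\lambda_n,\lambda_{n+2})|^p=(\lambda_{n+2}-\lambda_n)\,|f(\lambda_{n+2})-f(\lambda_n)|^p$, which leaves function values at the nodes $\lambda_n$ and $\lambda_{n+2}$ carrying the weight $h_n+h_{n+1}$. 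Neither is termwise dominated by $A+S_0$: the simp weight at $\lambda_n$ is $h_{n-1}+h_n$, which is uncontrolled when $h_{n+1}\gg h_{n-1},h_n$, and the simp weight at $\lambda_{n+2}$ is $h_{n+1}+h_{n+2}$, which is uncontrolled when $h_n\gg h_{n+1},h_{n+2}$. So whichever of the two steps is the long one, the full-window slope $f(\lambda_n,\lambda_{n+2})$ dumps mass on exactly the endpoint that $S_0$ cannot absorb, and bounding the resulting shifted sums globally is a problem of the same nature and difficulty as the one you started with.

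The missing step --- which is what the paper's chain \eqref{eq:I2}--\eqref{eq:simeq-**} supplies --- is to make the dichotomy $h_{n+1}\ge\tfrac12(\lambda_{n+2}-\lambda_n)$ versus $h_n\ge\tfrac12(\lambda_{n+2}-\lambda_n)$ \emph{before} applying any identity, and then to take the slope over the \emph{long step}, not over the whole window. If $h_{n+1}$ is long, one trades $f(\lambda_n,\lambda_{n+1})$ for $f(\lambda_{n+1},\lambda_{n+2})$ via your identity, pays a second-difference term that \eqref{eq:bounded-steps} converts into an $A$-term (this is \eqref{eq:foo}), and every function value then lands on $\lambda_{n+1}$ or $\lambda_{n+2}$, whose simp weights both contain $h_{n+1}\ge\tfrac12(\lambda_{n+2}-\lambda_n)$; the $B_0$ term at $n$ is first pushed to $\lambda_{n+1}$ in the same way. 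If instead $h_n$ is long, no identity and no $A$-term are needed: the ``crude'' bound you dismissed, $|f(\lambda_n,\lambda_{n+1})|\le 2\bigl(|f(\lambda_n)|+|f(\lambda_{n+1})|\bigr)/(\lambda_{n+2}-\lambda_n)$, is now legitimate because both $\lambda_n$ and $\lambda_{n+1}$ have simp weight at least $\tfrac12(\lambda_{n+2}-\lambda_n)$ (and here even \eqref{eq:bounded-steps} is unnecessary). Note also that, contrary to your expectation that one must ``argue globally,'' this resolution is entirely termwise once the case split is made, with each term charged to at most three neighboring terms of $A+S_0$. Your sketch names all the right ingredients --- the identity is the paper's \eqref{eq:I2} and the step-comparison dichotomy is the paper's --- but as written the hard direction is a plan whose first concrete step is misdirected, not a proof.
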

\begin{proof}
First, we have
\[
 |f(\lambda_n)| \leq |f(\lambda_{n-1})| + (\lambda_{n+1} -
\lambda_{n-1})|f(\lambda_{n-1},\lambda_n)|.
\]
Hence,
\[
(\lambda_{n+1}-\lambda_{n-1})|f(\lambda_n)|^p \leq 2^{p-1} [
(\lambda_{n+1}-\lambda_{n-1})|f(\lambda_{n-1})|^p + (\lambda_{n+1} -
\lambda_{n-1})^{p+1}|f(\lambda_{n-1},\lambda_n)|],
\]
by H\"older's inequality for sums.
Adding over $n \in \mathbb{Z}$, we obtain
$
\|f\|^p_{\text{simp},W} \leq 2^{p-1} \|f\|^p_{\text{eq},W}.
$

To prove the reverse inequality, we fix $n \in \mathbb{Z}$.
Since $\lambda_{n+2} - \lambda_n = (\lambda_{n+2} - \lambda_{n+1}) +
(\lambda_{n+1} - \lambda_n)$, we have either $\lambda_{n+2} - \lambda_{n+1} \geq
(\lambda_{n+2} - \lambda_n)/2$ or $\lambda_{n+1} - \lambda_n \geq
(\lambda_{n+2} - \lambda_n)/2$. Assume that $\lambda_{n+2} - \lambda_{n+1} \geq
(\lambda_{n+2} - \lambda_n)/2$. The other case is similar and easier.

By the triangle inequality and the inequality $|x+y|^p \leq 2^{p-1}[|x|^p +
|y|^p]$, one obtains the following inequalities:
\begin{gather}
\label{eq:I2}
|f(\lambda_n,\lambda_{n+1})|^p \leq 2^{p-1}[
(\lambda_{n+2}-\lambda_n)^p|f(\lambda_n,\lambda_{n+1},\lambda_{n+2})|^p
+
|f(\lambda_{n+1},\lambda_{n+2})|^p],
\\
\label{eq:I3}
(\lambda_{n+2}-\lambda_n)^p|f(\lambda_{n+1},\lambda_{n+2})|^p \leq 2^{p-1}[
|f(\lambda_{n+1})|^p + |f(\lambda_{n+2})|^p],\\
\label{eq:I4}
(\lambda_{n+2}-\lambda_n)|f(\lambda_{n+2})|^p \leq
2(\lambda_{n+2}-\lambda_{n+1})|f(\lambda_{n+2})|^p.
\end{gather}

By applying inequalities \eqref{eq:I2}, \eqref{eq:I3}, \eqref{eq:I4}, one gets
\begin{equation}
\label{eq:foo}
\begin{split}
 (\lambda_{n+2}-&\lambda_n)^{p+1}|f(\lambda_n,\lambda_{n+1})|^p \leq
 2^{2p-2}(\lambda_{n+2}-\lambda_n)|f(\lambda_{n+1})|^p \\ &+
2^{2p-1}(\lambda_{n+3}-\lambda_{n+1})|f(\lambda_{n+2})|^p
+
2^{p-1}(\lambda_{n+2}-\lambda_n)^{2p+1}|f(\lambda_n,\lambda_{n+1},\lambda_{n+2}
)|^p.
\end{split}
\end{equation}
Hence, we have
\begin{equation}
\label{eq:simeq-*}
\begin{split}
 (\lambda_{n+2}&-\lambda_n)|f(\lambda_n)|^p + 
 (\lambda_{n+2}-\lambda_n)^{p+1}|f(\lambda_n,\lambda_{n+1})|^p \leq \\ &\leq
 2^{p-1}(\lambda_{n+2}-\lambda_n)|f(\lambda_{n+1})|^p +
 (2^{p-1}+1)(\lambda_{n+2}-\lambda_n)^{p+1}|f(\lambda_n,\lambda_{n+1})|^p\\
 &\leq
 C(K)^p\Big[
 (\lambda_{n+2}-\lambda_n)|f(\lambda_{n+1})|^p +
 (\lambda_{n+3}-\lambda_{n+1})|f(\lambda_{n+2})|^p \\
 &\qquad{}\qquad{}+
 (\lambda_{n+2}-\lambda_n)|f(\lambda_n,\lambda_{n+1},\lambda_{n+2})|^p
\Big],
\end{split}
\end{equation}
where in the last inequality we have used
\eqref{eq:foo} and \eqref{eq:bounded-steps}.

In the case $\lambda_{n+1} - \lambda_n \geq
(\lambda_{n+2} - \lambda_n)/2$, one obtains
\begin{equation}
\label{eq:simeq-**}
\begin{split}
 (\lambda_{n+2}-\lambda_n)|f(\lambda_n)|^p &+
(\lambda_{n+2}-\lambda_n)^{p+1}|f(\lambda_n,\lambda_{n+1})|^p \leq\\
&\leq C'^p[(\lambda_{n+1}-\lambda_{n-1})|f(\lambda_n)|^p +
(\lambda_{n+2}-\lambda_n)|f(\lambda_{n+1})|^p].
\end{split}
\end{equation}
In fact, \eqref{eq:bounded-steps} is not necessary in this case.

For every index $n\in\mathbb{Z}$, we have either \eqref{eq:simeq-*} or
\eqref{eq:simeq-**}. By summing these inequalities over $n\in\mathbb{Z}$, we
obtain $\|f\|^p_{\text{eq},W} \leq C''(K)^p \|f\|^p_{\text{simp},W}$.
\end{proof}

Now we give an example that shows that, in general, for any $r \geq 3$, one
cannot eliminate all the terms $1 \leq j \leq r-1$ from the expression of
$\|f\|_{\text{eq},W}$. Assume that $|I| < \infty$ and suppose that one
wishes to prove the following equivalence of norms:
\begin{equation}
\label{eq:impossible-equivalence}
\|f\|^p_{W^r_p(I)|_\Lambda} \approx \|f\|^p_{\text{eq},L} +
\sum_{n\in\mathbb{Z}}
c_n|f(\lambda_n)|^{p},
\end{equation}
for some coefficients $\{c_n\}$ not depending on $f$, and with equivalence
constants depending only on $r,p,K$. Taking $f \equiv 1$ we see that
the coefficients $\{c_n\}$ must be in $\ell^1$ and satisfy
$\|\{c_n\}\|_1 \leq C(r,p,K)|I|$. Then, the next example shows that
\eqref{eq:impossible-equivalence} cannot hold in some cases.

\begin{example}
Let $r \geq 3$. Fix a small $h > 0$ and let $\Lambda =
\{\lambda_n\}_{n\in\mathbb{Z}}$
be a strictly increasing sequence such that $\lambda_0 = -1$, $\lambda_1 = 1$,
$\lambda_n \to 1+h$ as $n \to +\infty$ and $\lambda_n \to -1-h$ as $n \to
-\infty$. This sequence $\Lambda$ satisfies \eqref{eq:bounded-steps} with $K =
2$.

Put
$
 p(x) = [(1+h)^2 - x^2]/[h(2+h)]
$
and consider the interpolation problem given by $f(\lambda_n) = p(\lambda_n)$.
Then $F(x) = p(x)$ solves this problem.
Now observe that $\|p\|_{L^r_p(I)} = 0$, $\|p\|_{W^r_p(I)} \approx 1/h$ and
$|p(\lambda_n)| \leq 1$ for all $n \in \mathbb{Z}$. Hence, the right hand side
of \eqref{eq:impossible-equivalence} is less or equal than $C(r,p,K)|I|$.

It follows from \eqref{eq:theorem-relation-*} and Proposition~\ref{lower} that
\[
\|p\|^p_{W^r_p(I)} \leq C'(r,p,K)\|f\|^p_{\text{eq},W} \leq
C''(r,p,K)\|f\|^p_{W^r_p(I)|_\Lambda}.
\]
Hence, the left hand side of \eqref{eq:impossible-equivalence} is greater or
equal than $C''(r,p,K)^{-1}/h^p$. This is a contradiction for small enough $h >
0$.
\end{example}

However, the former example is in some sense pathological because it does not
satisfy
the property that one can choose $r+1$ points $\lambda_n$ which are ``well
separated''. It turns out that this kind of condition is the only thing one
needs to be able to eliminate the terms $1 \leq j \leq r-1$ in the expression
for $\|f\|_{\text{eq},W}$. This always happens if the interval $I$ is large
enough. In fact, one can prove the following Proposition.

\begin{proposition}
Let $r \geq 1$, $1 \leq p < \infty$ and assume that $|I| \geq (4r+2)K$
\emph{(}see \eqref{eq:bounded-steps}\emph{)}. Define $\|f\|_{\text{simp},W}$
from \eqref{eq:simple-W}. Then Theorem~\ref{theorem-relation} remains true if
one replaces $\|f\|_{\text{eq},W}$ with $\|f\|_{\text{simp},W}$.
\end{proposition}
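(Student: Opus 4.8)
The plan is to show that the proof of Theorem~\ref{theorem-relation} carries over \emph{verbatim} once $\|f\|_{\text{eq},W}$ is replaced by $\|f\|_{\text{simp},W}$. Inspecting that proof, $\|f\|_{\text{eq},W}$ enters only through two ingredients: the lower bound of Proposition~\ref{lower}, namely $\|F\|_{W^r_p(I)} \geq C\|f\|_{\text{eq},W}$, and the inequality \eqref{eq:theorem-relation-*}, namely $\|F\|^p_{L^p(I)} \leq C_1^p[\|F\|^p_{L^r_p(I)} + \|f\|^p_{\text{eq},W}]$. Everything else (the choice of a near-optimal $L^r_p$-extension, the splitting $\|F\|^p_{W^r_p} = \|F\|^p_{L^p} + \|F\|^p_{L^r_p}$, and the manipulation of trace norms) is untouched. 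So I would reduce the whole proposition to establishing the two analogues
\[
 \|F\|_{W^r_p(I)} \geq C\|f\|_{\text{simp},W}
 \qquad\text{and}\qquad
 \|F\|^p_{L^p(I)} \leq C\big[\|F\|^p_{L^r_p(I)} + \|f\|^p_{\text{simp},W}\big],
\]
with constants depending only on $r,p,K$; call them the \emph{lower} and \emph{upper} simp-estimates. Since $\|f\|_{\text{eq},L}$ is common to both norms and $\|f\|_{\text{simp},W}\le C\|f\|_{\text{eq},W}$ holds unconditionally (by the same $f(\lambda_n)\to f(\lambda_{n-1})$ recursion used in Proposition~\ref{simpler}), the reverse inequality $\|f\|_{\text{eq},W}\le C\|f\|_{\text{simp},W}$ is the only real content, and it is exactly what the upper simp-estimate encodes.

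The lower simp-estimate is immediate: the $\|f\|_{\text{eq},L}$ part is Proposition~\ref{lower}(\ref{norm-lower-homog}), while for the point-value part I would apply Lemma~\ref{f-lower-bound} with $k=0$ on each window $J=[\lambda_{n-1},\lambda_{n+1}]$ (of length $\le 2K$), giving $\|F\|^p_{W^r_p(J)}\ge C^p(\lambda_{n+1}-\lambda_{n-1})|F(\lambda_n)|^p$, and sum over $n$ using that each point lies in at most two such windows. This needs nothing beyond \eqref{eq:bounded-steps}.

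The upper simp-estimate is the heart of the matter, and this is where $|I|\ge(4r+2)K$ is used. I would cover $I$ by consecutive blocks $\{J_k\}$ of length comparable to $rK$, say $|J_k|\in[(2r+1)K,(4r+2)K]$; the hypothesis $|I|\ge(4r+2)K$ guarantees that $I$ can be tiled by such ``full'' blocks (merging any short leftover into a neighbour), and that each $J_k$ contains $r+1$ nodes of $\Lambda$ that are \emph{well spread} at the scale of the block — this is the quantitative form of the remark preceding the proposition. On a fixed block $J=J_k$ I would use a Poincaré estimate $\|F-P\|^p_{L^p(J)}\le C|J|^{rp}\|F^{(r)}\|^p_{L^p(J)}$ for a suitable polynomial $P$ of degree $\le r-1$, and then bound $\|P\|^p_{L^p(J)}$ by the simp-weighted node sum $\sum_{\lambda_m\in J}(\lambda_{m+1}-\lambda_{m-1})|P(\lambda_m)|^p$. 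Replacing $P(\lambda_m)$ by $f(\lambda_m)=F(\lambda_m)$ costs an error $\sum_m(\lambda_{m+1}-\lambda_{m-1})|(F-P)(\lambda_m)|^p$, which Lemma~\ref{f-lower-bound} (with $k=0$) together with the Poincaré bound absorbs into $\|F^{(r)}\|^p_{L^p(J)}$. Summing over $k$ (the blocks have bounded overlap, and $|J_k|^{rp}\le((4r+2)K)^{rp}$) yields the upper simp-estimate.

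The main obstacle is the polynomial step: a Marcinkiewicz--Zygmund / quadrature-stability inequality
\[
 \int_J|P|^p\,dx \le C(r,p)\sum_{\lambda_m\in J}(\lambda_{m+1}-\lambda_{m-1})|P(\lambda_m)|^p,
 \qquad \deg P\le r-1,
\]
valid uniformly over \emph{arbitrary} meshes with steps $\le K$ and $|J|\gtrsim rK$. I would prove it by viewing the right-hand side as a Riemann-type sum for $2\int_J|P|^p$ — note that the weights $(\lambda_{m+1}-\lambda_{m-1})$ tile $J$ with multiplicity two regardless of node clustering — and controlling the error by the oscillation of $|P|^p$, which Markov's and Nikolskii's inequalities bound by $C(r,p)(K/|J|)\int_J|P|^p$. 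Choosing the block length a large enough multiple of $rK$ makes this error at most $\tfrac12\int_J|P|^p$, which is precisely the role of the constant in $|I|\ge(4r+2)K$. The delicate points I expect to fight with are making these constants genuinely independent of the mesh (so that node clustering does not degrade them) and, since the Nikolskii constant grows with $p$, checking that the fixed threshold $(4r+2)K$ is large enough uniformly in $p$ — or, failing that, routing the node sum through the $r+1$ well-spread nodes and their Lagrange representation, which gives a mesh- and $p$-robust bound at the cost of a slightly larger threshold.
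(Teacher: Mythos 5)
Your overall reduction is exactly the paper's: the paper, too, reduces the Proposition to a lower bound $\|F\|_{W^r_p(I)}\geq c\|f\|_{\text{simp},W}$ (proved just like part (\ref{norm-lower}) of Proposition~\ref{lower}) plus a block-level substitute for \eqref{eq:theorem-relation-*} with $\|f\|_{\text{simp},W}$ on the right, and then reruns the proof of part (\ref{trr-a}) of Theorem~\ref{theorem-relation} verbatim. The difference is in how the block estimate is produced, and there your proposal has a genuine hole. The paper never introduces an approximating polynomial: it proves that if $|J|=(4r+2)K$ and $\eta_1,\dots,\eta_{r+1}\in J$ satisfy $|\eta_k-\eta_l|\geq K$ for $k\neq l$, then $\|F\|^p_{L^p(J)}\leq C(r,K)^p\bigl[\|F\|^p_{L^r_p(J)}+\sum_k|F(\eta_k)|^p\bigr]$, and then \emph{averages} this inequality over all admissible choices $\eta_k=\lambda_{n_k}\in\Lambda\cap J$ before summing over a multiplicity-two cover of $I$. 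The averaging is the decisive device: it is what converts unit weights at a few well-separated nodes into the weights $\lambda_{n+1}-\lambda_{n-1}$, no matter how $\Lambda$ clusters.

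Both of your mechanisms stall precisely where that averaging is needed. (i) The Riemann-sum/Markov--Nikolskii route requires the quadrature error to be absorbed, i.e.\ it would establish $\int_J|P|^p\leq C\sum_n(\lambda_{n+1}-\lambda_{n-1})|P(\lambda_n)|^p$ with $C$ \emph{independent of $p$}. That is false already for $r=2$: take $J=[0,10K]$, nodes at $(j+\tfrac12)K$, and $P(x)=x-5K$; the integral is $\approx 2(5K)^{p+1}/(p+1)$ while the node sum is $\approx 4K^{p+1}(4.5)^p$, so their ratio grows like $(10/9)^p/p$. Thus no fixed multiple of $rK$ works uniformly in $p$, and you may not let blocks grow with $p$: the constants in Theorem~\ref{theorem-relation} depend only on $r,K$, and $|I|$ may equal $(4r+2)K$ exactly, so ``a slightly larger threshold'' is not available either. (The inequality you need is true, but only with a constant of the form $C(r)^p$ --- harmless for a norm equivalence, fatal for an absorption argument.) (ii) The Lagrange fallback only yields $\int_J|P|^p\leq C(r)^p|J|\sum_k|P(\eta_k)|^p$; the passage from these unit weights to the simp weights is exactly the unproved point, and it is not automatic, since the well-spread nodes available to you may each lie inside a cluster and carry weight $\lambda_{n_k+1}-\lambda_{n_k-1}\ll K$, so no termwise comparison exists. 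What closes the gap is the paper's trick transplanted to $P$: choose $\eta_k$ at random among the nodes of $r+1$ disjoint windows of length $K$, with probabilities proportional to $\lambda_{n+1}-\lambda_{n-1}$ (each window carries total node weight $\geq K$, because the intervals $[\lambda_{n-1},\lambda_{n+1}]$ of its nodes cover it), and take expectations; alternatively a Remez-type inequality also works. With that one added ingredient your polynomial route does go through; as written, its central inequality is unproved, and neither of the two mechanisms you offer can prove it.
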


We will only give a sketch of the proof.

First one can prove that if $J$ is an interval with
$|J| = (4r+2)K$, and $\eta_k\in J$, $k=1,\ldots,r+1$
satisfy
$|\eta_k - \eta_l|\geq K$ for $k \neq l$, then
\[
 \|F\|^p_{L^p(J)} \leq C(r,K)^p\left[ \|F\|^p_{L^r_p(J)} + \sum_{k=1}^{r+1}
|F(\eta_k)|^p\right].
\]

Now one covers $I$ with intervals $I_j$ of length $(4r+2)K$ in such a way that
the interiors of any three of these intervals do not intersect. One obtains the
above inequality for $J = I_j$.
By averaging the above estimate among all possible choices of $\eta_k =
\lambda_{n_k} \in \Lambda \cap I_j$ satisfying $|\eta_k - \eta_l|\geq K$ for $k
\neq l$ and summing over all indices $j$, one gets
\[
 \|F\|^p_{L^p(J)} \leq C^p\left[ \|F\|^p_{L^r_p(J)} + \sum_{n
\in \mathbb{Z}} (\lambda_{n+1}-\lambda_{n-1})|F(\lambda_n)|^p\right] \leq 
C^p\left[ \|F\|^p_{L^r_p(J)} + \|f\|^p_{\text{simp},W}\right],
\]
where $C=C(r,K)$ and $f = F|_\lambda$.

Now one can repeat the arguments of the proof of (\ref{trr-a}) in
Theorem~\ref{theorem-relation}, replacing $\|f\|_{\text{eq},W}$ by
$\|f\|_{\text{simp},W}$ and using this last inequality instead of
\eqref{eq:theorem-relation-*}. Here, one must use the inequalities
$\|f\|_{\text{eq},L} \leq \|f\|_{\text{simp},W}$, which is trivial, and
$\|f\|_{W^r_p(I)|_\Lambda} \geq C' \|f\|_{\text{simp},W}$, which is proved in
the same way as (\ref{norm-lower}) in Proposition~\ref{lower}. Then, it is easy
to see that all the steps made after \eqref{eq:theorem-relation-*} in the proof
of (\ref{trr-a}) in
Theorem~\ref{theorem-relation} remain true when we replace
$\|f\|_{\text{eq},W}$ with $\|f\|_{\text{simp},W}$.

\section{Open questions}
\label{open-questions}

The first open questions are whether one can generalize our results on
$\mathbb{R}$
to a higher number of derivatives. In particular, we ask the following.

\begin{question}
\label{quest-1}
Are \eqref{eq:equivalent-L} and \eqref{eq:equivalent-W} true for $r \geq 3$?
\end{question}

Theorem~\ref{theorem-relation} shows that if
\eqref{eq:equivalent-L} is true for some values of $r,p$, then
\eqref{eq:equivalent-W} is also true for the same values of $r,p$.

\begin{question}
Suppose that the answer to Question~\ref{quest-1} is affirmative. Can one give
explicit quasi-optimal spline interpolators for $r \geq 3$?
\end{question}

As before, Theorem~\ref{theorem-relation} shows that it is enough to prove that
some interpolator is quasi-optimal for $L^r_p$.

Another matter is whether one can give similar results in higher dimension,
i.e.~on $\mathbb{R}^d$, $d \geq 2$.  Fefferman, Israel and Luli show in
\cite{FeffermanIsraelLuliJune} that it is impossible to obtain bounded depth
interpolators for $d \geq 2$. Hence, one cannot obtain simple interpolators
like $\Phi_1$ and $\Phi_2$ for a general configuration of points.

However, it might be possible to obtain nice formulas and explicit interpolators
if one imposes some kind of regularity conditions on the configuration of
points.
The conditions should be mild enough to allow the usual cases which appear in
applications.

One possible condition is the minimum angle condition, which
is widely used in numerical analysis. It means that in the triangular mesh
formed by the points, the interior angles of every triangle are uniformly
bounded form below. See \cite{Brandts} for a review on this and other usual
geometric conditions in numerical analysis.

Hence, we ask the following question.

\begin{question}
 Can one give simple formulas for the trace space norm and quasi-optimal
explicit interpolators in $L^r_p(\mathbb{R}^d)$ and $W^r_p(\mathbb{R}^d)$ if one
imposes some regularity conditions on the allowed configurations of points?
\end{question}

\section*{Acknowledgments}

The author would like to thank Dmitry Yakubovich for posing this
problem and for all his helpful advice during the completion of this work.

\end{document}